\newtheorem{theorem}{Theorem}[section]
\newtheorem{corollary}[theorem]{Corollary}
\newtheorem{lemma}[theorem]{Lemma}
\newtheorem{proposition}[theorem]{Proposition}
\newtheorem{remark}[theorem]{Remark}
\newtheorem{example}[theorem]{Example}
\begin{document}

\title{\bf Vertex connectivity of the power graph of a finite cyclic group}
\author{Sriparna Chattopadhyay \and Kamal Lochan Patra \and Binod Kumar Sahoo}
\date{}
\maketitle

\begin{abstract}
Let $n=p_1^{n_1}p_2^{n_2}\ldots p_r^{n_r}$, where $r,n_1,\ldots, n_r$ are positive integers and $p_1,p_2,\ldots,p_r$ are distinct prime numbers with $p_1<p_2<\cdots <p_r$. For the cyclic group $C_n$ of order $n$, let $\mathcal{P}(C_n)$ be the power graph of $C_n$ and $\kappa(\mathcal{P}(C_n))$ be the vertex connectivity of $\mathcal{P}(C_n)$. It is known that $\kappa(\mathcal{P}(C_n))=p_1^{n_1} -1$ if $r=1$. For $r\geq 2$, we determine the exact value of $\kappa(\mathcal{P}(C_n))$ when $2\phi(p_1\ldots p_{r-1})\geq p_1\ldots p_{r-1}$, and give an upper bound for $\kappa(\mathcal{P}(C_n))$ when $2\phi(p_1\ldots p_{r-1}) < p_1\ldots p_{r-1}$, which is sharp for many values of $n$ but equality need not hold always. \\

\noindent {\bf Key words:} Power graph, Vertex connectivity, Cyclic group, Euler's totient function \\
{\bf AMS subject classification.} 05C25, 05C40, 20K99
\end{abstract}

\section{Introduction}

The notion of directed power graph of a group was introduced by Kelarev et al. in \cite{ker1}, which was further extended to semigroups in \cite{ker1-1, ker2}. Chakrabarty et al. defined the notion of undirected power graph of a semigroup, in particular, of a group in \cite{ivy}. Since then many researchers have investigated both directed and undirected power graphs of groups from different view points. More on these graphs can be found in the survey paper \cite{survey} and the references therein.

Let $G$ be a finite group. The {\it power graph} $\mathcal{P}(G)$ of $G$ is the simple undirected graph with vertex set $G$, in which two distinct vertices are adjacent if and only if one of them can be obtained as a power of the other. Since $G$ is finite, the identity element is adjacent to all other vertices and so $\mathcal{P}(G)$ is connected. The {\it vertex connectivity} of $\mathcal{P}(G)$, denoted by $\kappa(\mathcal{P}(G))$, is the minimum number of vertices which need to be removed from the vertex set $G$ so that the induced subgraph of $\mathcal{P}(G)$ on the remaining vertices is disconnected or has only one vertex. The latter case arises only when $\mathcal{P}(G)$ is a complete graph.

Note that $\kappa(\mathcal{P}(G))=|G|-1$ if and only if $\mathcal{P}(G)$ is a complete graph. Chakrabarty et al. proved in \cite[Theorem 2.12]{ivy} that $\mathcal{P}(G)$ is a complete graph if and only if $G$ is a cyclic group of prime power order. We have $\kappa(\mathcal{P}(G))=1$ if and only if $|G|=2$ or the subgraph $\mathcal{P}(G^{\ast})$ of $\mathcal{P}(G)$ is disconnected, where $G^{\ast}=G\setminus\{1\}$. This is equivalent to saying that $\kappa(\mathcal{P}(G))>1$ if and only if $|G|\geq 3$ and $\mathcal{P}(G^{\ast})$ is connected. Recently, some authors have studied the connectedness of $\mathcal{P}(G^{\ast})$, see \cite{doos} and \cite{mog} for related works in this direction.

Let $A(G)$ be the subset of $G$ consisting of those vertices which are adjacent to all other vertices of $\mathcal{P}(G)$. The identity element of $G$ is in $A(G)$. The groups $G$ for which $|A(G)|>1$ were described in \cite[Proposition 4]{cam}.

\begin{theorem}\cite{cam}\label{cam-1}
Let $G$ be a finite group for which $|A(G)|>1$. Then one of the following holds:
\begin{enumerate}
\item[(1)] $G$ is a cyclic group of prime power order and $A(G)=G$;
\item[(2)] $G$ is a cyclic group of non-prime-power order and $A(G)$ consists of the identity element and the generators of $G$;
\item[(3)] $G$ is a generalized quaternion $2$-group and $A(G)$ contains the identity element and the unique involution in $G$.
\end{enumerate}
\end{theorem}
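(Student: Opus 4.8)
The plan is to fix, using the hypothesis $|A(G)|>1$, a non-identity element $x\in A(G)$ and to extract all the structure from the single fact that $x$ is adjacent in $\mathcal{P}(G)$ to every other vertex; write $m=o(x)$, so that adjacency of $x$ and $y$ means $x\in\langle y\rangle$ or $y\in\langle x\rangle$. First I would record two elementary consequences. (i) Every $y\in G$ with $o(y)\mid m$ lies in $\langle x\rangle$: if not, adjacency forces $x\in\langle y\rangle$, whence $m=o(x)\mid o(y)\mid m$, so $\langle x\rangle=\langle y\rangle$ and $y\in\langle x\rangle$, a contradiction. (ii) Every prime $p$ dividing $|G|$ divides $m$: applying adjacency to an element of order $p$ (which exists by Cauchy's theorem), if $p\nmid m$ that element lies outside $\langle x\rangle$ and so forces $m\mid p$, i.e. $m=p$, contradicting $p\nmid m$. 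Combining (i) and (ii), for each prime $p\mid|G|$ every element of order $p$ lies in the cyclic group $\langle x\rangle$, which has a unique subgroup of order $p$; hence $G$ itself has a unique subgroup of order $p$, and in particular so does each Sylow $p$-subgroup.

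If $G$ is a $p$-group, I would invoke the classical classification of finite $p$-groups possessing a unique subgroup of order $p$: such a group is cyclic, or $p=2$ and it is generalized quaternion. These give conclusions (1) and (3). In case (1) the description of $\mathcal{P}(G)$ as a complete graph for cyclic groups of prime power order, recorded earlier, yields $A(G)=G$; in the generalized quaternion case one checks directly that the unique involution $z$ lies in every nontrivial cyclic subgroup, hence $z\in A(G)$ and $\{1,z\}\subseteq A(G)$.

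If $|G|$ has at least two prime divisors, I claim $G=\langle x\rangle$. Let $y\in G$ with $y\notin\langle x\rangle$; adjacency forces $x\in\langle y\rangle$, and since $y\notin\langle x\rangle$ the inclusion $\langle x\rangle\subsetneq\langle y\rangle$ is proper, so $o(y)$ is a proper multiple of $m$. Then some prime $q$ satisfies $q^{a+1}\mid o(y)$, where $q^{a}$ is the exact power of $q$ dividing $m$, and the cyclic group $\langle y\rangle$ contains an element $u$ of order $q^{a+1}$. As the $q$-part of $m$ is only $q^{a}$ we have $u\notin\langle x\rangle$, so adjacency forces $x\in\langle u\rangle$; but $\langle u\rangle$ is a $q$-group, so $m$ would be a power of $q$, contradicting that $|G|$ has two prime divisors. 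Hence no such $y$ exists, $G=\langle x\rangle$ is cyclic of non-prime-power order, and the same argument applied to any non-identity element of $A(G)$ shows it generates $G$; conversely $1$ and the generators are adjacent to all vertices, giving conclusion (2).

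I expect the main obstacle to be the $p$-group case, since it is the only place where a non-cyclic group survives and concluding that a $2$-group with a unique involution is cyclic or generalized quaternion rests on the nontrivial structure theorem quoted above, which I would cite rather than reprove. By contrast the two-prime case is entirely self-contained: the order-comparison argument needs nothing beyond Cauchy's theorem and the divisibility bookkeeping in steps (i)--(ii).
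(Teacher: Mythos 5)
Note first that the paper offers no proof of this statement: it is quoted verbatim from \cite{cam} (Cameron, Proposition~4), so there is no internal argument to compare against; your proposal can only be judged against the standard proof in the source, and it is correct and follows essentially that line. Your steps (i) and (ii) are sound and together give, for every prime $p$ dividing $|G|$, that all elements of order $p$ lie in the cyclic group $\langle x\rangle$, whence $G$ has a unique subgroup of order $p$; the prime-power case then rests, exactly as in Cameron's argument, on the Burnside classification of $p$-groups with a unique subgroup of order $p$ (cyclic, or generalized quaternion when $p=2$), which is legitimately cited rather than reproved. The two-prime case is also correct: the element $u$ of order $q^{a+1}$ inside $\langle y\rangle$ cannot lie in $\langle x\rangle$, forcing $x\in\langle u\rangle$ and hence $m=o(x)$ to be a $q$-power. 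One small presentational point: the final contradiction there is not literally ``$|G|$ has two prime divisors'' but rather your step (ii), which guarantees that $m$ is divisible by at least two distinct primes and so cannot be a prime power; you use (ii) implicitly and should say so explicitly. With that clarification, the argument is complete, including the verifications that $A(G)=G$ in case (1) (completeness of $\mathcal{P}(G)$ for cyclic $p$-groups), that the unique involution of a generalized quaternion group lies in every nontrivial subgroup in case (3), and that in case (2) the same generation argument applied to an arbitrary non-identity element of $A(G)$ identifies $A(G)$ with the identity together with the generators.
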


Determining $\kappa(\mathcal{P}(G))$ for an arbitrary finite group $G$ is an interesting problem in the study of power graphs. When $\mathcal{P}(G)$ is not a complete graph, in order to find $\kappa(\mathcal{P}(G))$, first of all one needs to remove the vertices in $A(G)$ from $G$. This gives $\kappa(\mathcal{P}(G))\geq |A(G)|$. If $G$ is not a cyclic group, then the following upper bound for $\kappa(\mathcal{P}(G))$ was obtained in \cite[Theorem 10]{mir}.

\begin{theorem}\cite{mir}
Let $G$ be a non-cyclic finite group. Let $M(G)$ be the subset of $G$ consisting of elements $x$ for which $\langle x\rangle$ is a maximal cyclic subgroup of $G$. For $x\in M(G)$, define
$$r(x)=\underset{y\in M(G)\setminus\langle x\rangle}{\bigcup} (\langle x\rangle \cap \langle y\rangle).$$
Then $$\kappa(\mathcal{P}(G))\leq \min \{|r(x)|:x\in M(G)\}.$$
\end{theorem}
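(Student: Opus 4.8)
The plan is to prove the upper bound by exhibiting, for each $x \in M(G)$, an explicit vertex cut of $\mathcal{P}(G)$ of size $|r(x)|$; minimizing over $x \in M(G)$ then yields the claimed bound. The natural candidate for the cut is $r(x)$ itself. Since every term $\langle x\rangle\cap\langle y\rangle$ in the union lies inside $\langle x\rangle$, we have $r(x) \subseteq \langle x \rangle$, so removing $r(x)$ from $G$ leaves the two disjoint sets $A = \langle x \rangle \setminus r(x)$ and $B = G \setminus \langle x \rangle$. The goal is to show that $A$ and $B$ lie in different connected components of the induced subgraph on $G\setminus r(x)$.

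First I would check that both $A$ and $B$ are nonempty. The set $B$ is nonempty because $G$ is non-cyclic, so $\langle x \rangle \subsetneq G$. For $A$, I would argue that $x \notin r(x)$: if $x$ lay in some $\langle y \rangle$ with $y \in M(G)$ and $\langle y \rangle \neq \langle x \rangle$, then $\langle x \rangle \subseteq \langle y \rangle$ would contradict the maximality of the cyclic subgroup $\langle x \rangle$. Hence $x \in A$, and in particular $A\neq\emptyset$.

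The heart of the argument is to show there is no edge of $\mathcal{P}(G)$ joining a vertex $a \in A$ to a vertex $b \in B$. Suppose such an edge existed; by the definition of the power graph, either $b \in \langle a \rangle$ or $a \in \langle b \rangle$. In the first case $b \in \langle a \rangle \subseteq \langle x \rangle$ contradicts $b \in B$. In the second case $\langle a \rangle \subseteq \langle b \rangle$, and choosing a maximal cyclic subgroup $\langle y \rangle$ (with $y \in M(G)$) containing $\langle b \rangle$, we get $a \in \langle y \rangle$. If $\langle y \rangle = \langle x \rangle$ then $b \in \langle x \rangle$, again contradicting $b \in B$; otherwise $a \in \langle x \rangle \cap \langle y \rangle \subseteq r(x)$, contradicting $a \in A$. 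This case analysis, together with the fact that every element of a finite group lies in some maximal cyclic subgroup, is the step I expect to require the most care, since it is where the maximality of $\langle x \rangle$ and the adjacency criterion must be combined correctly.

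Finally, since $A$ and $B$ are nonempty with no edges between them, the induced subgraph on $G \setminus r(x)$ is disconnected, so $r(x)$ is a vertex cut and $\kappa(\mathcal{P}(G)) \leq |r(x)|$. As $x \in M(G)$ was arbitrary (and $M(G) \neq \emptyset$ since $G$ is finite), taking the minimum over $x$ gives $\kappa(\mathcal{P}(G)) \leq \min\{|r(x)| : x \in M(G)\}$. I would also note that $\mathcal{P}(G)$ is genuinely non-complete here, since by the cited result of Chakrabarty et al. completeness forces $G$ to be cyclic of prime power order; hence the ``disconnected'' rather than ``single vertex'' case in the definition of vertex connectivity is the relevant one, and the argument above indeed produces a disconnecting set.
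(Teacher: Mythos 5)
Your proof is correct, but note that the paper does not prove this statement at all --- it is quoted as a known result from Mirzargar et al.\ \cite{mir}, so there is no internal proof to compare against; your argument (remove $r(x)$, separate $A=\langle x\rangle\setminus r(x)\ni x$ from $B=G\setminus\langle x\rangle$, and rule out edges using that every element lies in a maximal cyclic subgroup and that $x\notin r(x)$ by maximality) is precisely the standard proof given in \cite{mir}. The only detail worth making explicit is the equivalence you use silently: for $y\in M(G)$, one has $y\notin\langle x\rangle$ if and only if $\langle y\rangle\neq\langle x\rangle$ (since $y\in\langle x\rangle$ forces $\langle y\rangle\subseteq\langle x\rangle$ and maximality of $\langle y\rangle$ then gives equality), which justifies replacing the index set $M(G)\setminus\langle x\rangle$ by the condition $\langle y\rangle\neq\langle x\rangle$ in your case analysis.
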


To our knowledge, $\kappa(\mathcal{P}(G))$ is not determined even when $G$ is an arbitrary finite cyclic group. In this paper, we consider $G=C_n$, a finite cyclic group of order $n$ and study the vertex connectivity $\kappa(\mathcal{P}(C_n))$ of $\mathcal{P}(C_n)$. We note that the graph $\mathcal{P}(C_n)$ has the maximum number of edges among all the power graphs of finite groups of order $n$. This property of $\mathcal{P}(C_n)$ was conjectured by Mirzargar et al. in \cite{mir} and proved by Curtin and Pourgholi in \cite{cur}. So it is natural to expect that $\kappa(\mathcal{P}(C_n))$ would be large in general.

If $n=1$ or if $n$ is divisible by only one prime number, then $\mathcal{P}\left(C_{n}\right)$ is a complete graph and so $\kappa\left(\mathcal{P}\left(C_{n}\right)\right)=n-1$. Therefore, for the rest of the paper, we assume that $n$ is divisible by at least two distinct prime numbers. We write the prime power decomposition of $n$ as
$$n=p_1^{n_1}p_2^{n_2}\ldots p_r^{n_r},$$
where $r\geq 2$, $n_1,\ldots, n_r$ are positive integers and $p_1,\ldots,p_r$ are distinct prime numbers with
$$p_1<p_2<\cdots <p_r.$$
Since $r\geq 2$, the set $A(C_n)$ consists of the identity element and the generators of $C_n$ (see Theorem \ref{cam-1}). So
\begin{equation}\label{eqn-0}
\kappa(\mathcal{P}(C_n))\geq |A(C_n)|=\phi(n) +1,
\end{equation}
where $\phi:\mathbb{N}\longrightarrow \mathbb{N}$ is the Euler's totient function. Note that the number of generators of $C_n$ is $\phi(n)$. If $n=p_1p_2$, then equality holds in (\ref{eqn-0}) by \cite[Theorem 3(ii)]{sri}. We shall show that the converse is also true (see Lemma \ref{iff}). When $n=p_1^{n_1}p_2^{n_2}$, it was proved in \cite[Theorem 2.7]{CP-2015} that
$$\kappa(\mathcal{P}(C_n))\leq \phi(n)+p_1^{n_1 -1}p_2^{n_2 -1}.$$
If $n=p_1p_2p_3$, then \cite[Theorem 2.9]{CP-2015} gives that
$$\kappa(\mathcal{P}(C_n))\leq \phi (n)+ p_1p_2 - \phi(p_1p_2).$$
Here we determine the exact value of $\kappa(\mathcal{P}(C_n))$ when $2\phi(p_1\ldots p_{r-1})\geq p_1\ldots p_{r-1}$ and give an upper bound for $\kappa(\mathcal{P}(C_n))$ when $2\phi(p_1\ldots p_{r-1}) < p_1\ldots p_{r-1}$.

\subsection{Main Results}

For a given subset $X$ of $C_n$, we define $\overline{X}=C_n\setminus X$ and denote by $\mathcal{P}(\overline{X})$ the induced subgraph of $\mathcal{P}(C_n)$ with vertex set $\overline{X}$. We prove the following in Sections \ref{upperbounds} and \ref{p1>=r -1}.

\begin{theorem}\label{main}
Let $n=p_1^{n_1}\ldots p_r^{n_r}$, where $r\geq 2$, $n_1,\ldots, n_r$ are positive integers and $p_1,\ldots,p_r$ are distinct prime numbers with
$p_1<p_2<\cdots <p_r$. Then the following hold:
\begin{enumerate}
\item[(i)]If $2\phi(p_1\ldots p_{r-1}) > p_1\ldots p_{r-1}$, then
$$\kappa(\mathcal{P}(C_n))=\phi(n) + p_1^{n_1 -1}\ldots p_{r-1}^{n_{r-1} -1} p_r^{n_r -1} \left[p_1p_2\ldots p_{r-1} - \phi(p_1p_2\ldots p_{r-1})\right].$$
Further, there is only one subset $X$ of $C_n$ with $|X|=\kappa(\mathcal{P}(C_n))$ such that $\mathcal{P}(\overline{X})$ is disconnected.
\item[(ii)] If $2\phi(p_1\ldots p_{r-1}) < p_1\ldots p_{r-1}$, then
$$\kappa(\mathcal{P}(C_n))\leq \phi(n) + p_1^{n_1 -1}\ldots p_{r-1}^{n_{r-1} -1} \left[p_1\ldots p_{r-1} + \phi(p_1\ldots p_{r-1})(p_r^{n_r -1}-2)\right].$$
\item[(iii)] If $2\phi(p_1\ldots p_{r-1}) = p_1\ldots p_{r-1}$, then $r=2$, $p_1=2$ (so that $n=2^{n_1} p_2^{n_2}$) and
$$\kappa(\mathcal{P}(C_n))= \phi(n) + 2^{n_1 -1} p_2^{n_2 -1}.$$
Moreover, there are $n_2$ subsets $X$ of $C_n$ with $|X|=\kappa(\mathcal{P}(C_n))$ for which $\mathcal{P}(\overline{X})$ is disconnected.
\end{enumerate}
\end{theorem}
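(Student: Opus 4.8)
The plan is to reduce the computation of $\kappa(\mathcal{P}(C_n))$ to an extremal problem on the divisor lattice of $n$, solve that problem by exhibiting explicit separating sets, and then prove their optimality. First I would record the structural description of $\mathcal{P}(C_n)$: for each divisor $d$ of $n$ the $\phi(d)$ elements of order $d$ form a clique, all having the same neighbours outside it, and the cliques of orders $d$ and $d'$ are completely joined precisely when $d\mid d'$ or $d'\mid d$. Since elements of equal order are true twins, every minimal vertex cut is a union of such order-cliques; moreover the identity and the $\phi(n)$ generators are adjacent to everything, so every cut must contain them. Encoding a cut by the set $D$ of intermediate divisors ($1<d<n$) it uses, together with the orders $1$ and $n$, this reduces the problem to
$$\kappa(\mathcal{P}(C_n)) = \phi(n)+1+\mu, \qquad \mu = \min\Big\{\textstyle\sum_{d\in D}\phi(d)\Big\},$$
the minimum being over all $D$ whose removal disconnects the comparability (divisibility) graph on the intermediate divisors.

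For the upper bounds I would produce two families of separators. Put $M=p_1^{n_1}\cdots p_{r-1}^{n_{r-1}}$ and $m=p_1\cdots p_{r-1}$. The first (``Hall--Sylow'') separator removes every intermediate divisor that is neither a multiple of $M$ nor a multiple of $p_r^{n_r}$; the two surviving classes are then pairwise incomparable, and a telescoping computation using $\sum_{d\mid k}\phi(d)=k$ gives its weight as exactly $N(m-\phi(m))-1$, where $N=p_1^{n_1-1}\cdots p_r^{n_r-1}$. The second separator isolates the single clique of order $M$ by deleting all of its neighbours; its weight works out to $M-1-2\phi(M)+\phi(M)p_r^{\,n_r-1}$. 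Comparing the two, their difference equals $\big(p_r^{\,n_r-1}-1\big)\big(2\phi(M)-M\big)$, with $2\phi(M)-M=(M/m)(2\phi(m)-m)$, so the Hall--Sylow bound is the smaller one exactly when $2\phi(m)>m$ (giving the value in (i)), the isolation bound is smaller when $2\phi(m)<m$ (giving (ii)), and they coincide when $2\phi(m)=m$; an elementary totient argument shows that this last equality forces $r=2$ and $p_1=2$, which is case (iii). In case (iii) I would interpolate between the two extremes by the family $S_t$ ($0\le t\le n_2-1$) that isolates the partial top row $\{2^{n_1}p^{c}:0\le c\le t\}$, and check that each $S_t$ again has weight $2^{n_1-1}p^{n_2-1}-1$, producing $n_2$ distinct cuts of the claimed size.

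The substantive part is the matching lower bound, i.e.\ showing $\mu$ is at least this value in cases (i) and (iii); I expect this to be the main obstacle. Equivalently one must bound the maximum weight of a subset $T$ of intermediate divisors whose comparability graph is disconnected, since $\mu=(\text{total intermediate weight})-\max|T|$. Writing $T=T_1\sqcup T_2$ with every element of $T_1$ incomparable to every element of $T_2$, the plan is an extremal/exchange argument: for any $u\in T_1$, $w\in T_2$ the divisors $\gcd(u,w)$ and $\mathrm{lcm}(u,w)$ are comparable to both, hence (when intermediate) lie in the separator, and one uses these closure properties to push an arbitrary optimal pair $(T_1,T_2)$ into one of the canonical shapes above without decreasing its weight. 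Organising the divisor poset as a product of chains and keeping the weight bookkeeping exact under these moves is where the real difficulty lies, and it is also what pins down the extremal configurations.

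Finally, for the uniqueness and counting statements I would analyse the equality cases of the extremal argument. In case (i) the strict inequality $2\phi(m)>m$ should force every maximiser $T$ to be the Hall--Sylow partition, giving a unique $X$; in case (iii) the equality $2\phi(m)=m$ is precisely what lets the one-parameter family $\{S_t\}_{t=0}^{n_2-1}$ tie, and I would show these exhaust the maximisers, yielding exactly $n_2$ cuts. The main risk is that the exchange argument leaves a few borderline configurations to be eliminated by hand; maintaining exact (not asymptotic) weight counts throughout is essential for both the equality dichotomy and the final count.
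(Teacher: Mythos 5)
Your reduction and your upper-bound constructions are sound and in fact coincide with the paper's. The ``true twins'' observation is the paper's Lemma \ref{simple-3}, and the identity $\kappa(\mathcal{P}(C_n))=\phi(n)+1+\mu$ over the weighted divisor-comparability graph is a clean reformulation of it. Your Hall--Sylow separator is exactly the paper's $Z(r,n_r-1)$, your isolation separator is $Z(r,0)$, and your family $S_t$ is $Z(2,t)$; I checked your weights ($N(m-\phi(m))-1$ and $M-1-2\phi(M)+\phi(M)p_r^{n_r-1}$ in intermediate-divisor terms) and your difference formula $(p_r^{n_r-1}-1)(2\phi(M)-M)$ against Proposition \ref{size-Z}, and they agree, as does the totient argument for the dichotomy in (iii). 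So part (ii), the upper bounds in (i) and (iii), and the existence of $n_2$ cuts of the right size in (iii) are genuinely established by your proposal.

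The gap is that the matching lower bounds --- and with them the equalities in (i) and (iii), the uniqueness of $X$ in (i), and the exactness of the count $n_2$ in (iii) --- are precisely the substance of the theorem, and your proposal leaves them as a hoped-for gcd/lcm exchange argument whose ``real difficulty'' you name but do not resolve. The gcd/lcm closure property is correct (it is why the subgroups $S_{m_{i,j}}$ land in the cut in the paper), but by itself it does not let you push an optimal pair $(T_1,T_2)$ into canonical shape: one must first control how the large twin-classes $E_{n/p_i}$ distribute between the two sides, and this is where all the quantitative work lives. The paper does it by a chain of sharp cardinality comparisons under the hypothesis $2\phi(m)>m$: Proposition \ref{disjoint} shows each $E_{n/p_i}$, $i\leq r-1$, avoids the cut (via Lemma \ref{simple-1}); Propositions \ref{E-m-r} and \ref{all} show they all lie on one side, the split cases being killed by the inequality of Lemma \ref{simple-4}, i.e.\ $\phi(p_{i_1}\cdots p_{i_a})-p_{i_1}\cdots p_{i_a}+\sum_s p_{i_1}\cdots p_{i_a}/p_{i_s}\geq 0$; and Proposition \ref{E-m-r-in-A} then forces $X\supseteq Z(r,n_r-1)$, whence $X=Z(r,n_r-1)$ by minimality --- which simultaneously yields uniqueness. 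Likewise in (iii) the paper shows every minimum cut \emph{equals} some $Z(2,t)$, which is more than exhibiting $n_2$ ties. Note also Example \ref{example} ($n=2310$): once $2\phi(m)<m$ the optimal separator is of neither of your two canonical shapes, so any exchange argument must invoke $2\phi(m)\geq m$ quantitatively at every move; without supplying those estimates (the analogues of Lemmas \ref{simple-1} and \ref{simple-4}), the main claims of (i) and the ``moreover'' clauses of (i) and (iii) remain unproven in your write-up.
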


Note that the values in the right hand side of the (in)equalities in Theorem \ref{main} are equal if $n_r=1$. As a consequence of Theorem \ref{main}(i) and (iii), we obtain the following when the smallest prime divisor of $n$ is at least the number of distinct prime divisors of $n$.

\begin{corollary}\label{coro}
If $p_1\geq r$, then
$$\kappa(\mathcal{P}(C_n))= \phi(n) + p_1^{n_1 -1}\ldots p_{r-1}^{n_{r-1} -1} p_r^{n_r -1} \left[p_1p_2\ldots p_{r-1} - \phi(p_1p_2\ldots p_{r-1})\right].$$
\end{corollary}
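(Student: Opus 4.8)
The plan is to deduce the corollary directly from Theorem~\ref{main}: since parts (i) and (iii) already supply the exact value of $\kappa(\mathcal{P}(C_n))$, while part (ii) gives only an upper bound, the entire task is to show that the hypothesis $p_1\geq r$ forces us into case (i) or case (iii), i.e.\ that it excludes the strict inequality $2\phi(p_1\ldots p_{r-1}) < p_1\ldots p_{r-1}$ of part (ii). Equivalently, I would establish the number-theoretic inequality
$$2\phi(p_1\ldots p_{r-1}) \geq p_1\ldots p_{r-1} \quad \text{whenever } p_1 \geq r.$$

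First I would recast this in multiplicative form. Since $\phi(p_1\ldots p_{r-1})=\prod_{i=1}^{r-1}(p_i-1)$, the claim is equivalent to $\prod_{i=1}^{r-1}\left(1-\frac{1}{p_i}\right)\geq \frac{1}{2}$. The key observation is that $p_1,\ldots,p_{r-1}$ are distinct primes written in increasing order with smallest member $p_1\geq r$, so, being distinct integers, they satisfy $p_i\geq p_1+(i-1)\geq r+i-1$ for each $i$. As $1-\frac{1}{x}$ is increasing for $x>0$, this yields $1-\frac{1}{p_i}\geq 1-\frac{1}{r+i-1}=\frac{r+i-2}{r+i-1}$.

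The main (and essentially only) computation is then a telescoping product:
$$\prod_{i=1}^{r-1}\left(1-\frac{1}{p_i}\right)\geq \prod_{i=1}^{r-1}\frac{r+i-2}{r+i-1}=\frac{r-1}{2r-2}=\frac{1}{2},$$
which proves the inequality and rules out case (ii) of Theorem~\ref{main}.

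Finally I would assemble the conclusion according to whether the inequality above is strict. If $2\phi(p_1\ldots p_{r-1}) > p_1\ldots p_{r-1}$, we are in case (i) and the formula is exactly the claimed one. If equality holds, then part (iii) applies, forcing $r=2$ and $p_1=2$; here $p_1\ldots p_{r-1}=2$ and $\phi(p_1\ldots p_{r-1})=1$, so the bracketed factor $p_1\ldots p_{r-1}-\phi(p_1\ldots p_{r-1})$ equals $1$ and the corollary's formula collapses to $\phi(n)+2^{n_1-1}p_2^{n_2-1}$, which is precisely the value given in part (iii). Thus the stated formula holds in every case. The only real content is the telescoping estimate; I expect the mild subtlety to lie in justifying $p_i\geq r+i-1$ cleanly from distinctness and ordering, and in observing that equality in the product can occur only in the degenerate configuration $r=2$, $p_1=2$, which is exactly the case (iii) already handled.
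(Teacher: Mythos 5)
Your proposal is correct and follows essentially the same route as the paper: both reduce the corollary to the inequality $2\phi(p_1\ldots p_{r-1})\geq p_1\ldots p_{r-1}$, proved via $p_i\geq r+i-1$ and the same telescoping product $\prod_{i=1}^{r-1}\frac{r+i-2}{r+i-1}=\frac{1}{2}$, with equality exactly when $(r,p_1)=(2,2)$, and then invoke Theorem~\ref{main}(i) and (iii). Your explicit verification that the corollary's formula collapses to the value in part (iii) when $r=2$, $p_1=2$ is a small detail the paper leaves implicit, but the argument is the same.
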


In order to show that the bound in Theorem \ref{main}(ii) is sharp for many values of $n$, we prove the following in Section \ref{last-sec} when $r= 3$.

\begin{theorem}\label{main-1}
Let $n=p_1^{n_1}p_2^{n_2}p_3^{n_3}$, where $n_1,n_2,n_3$ are positive integers and $p_1,p_2,p_3$ are distinct prime numbers with
$p_1<p_2<p_3$. If $2\phi(p_1p_2) < p_1p_2$, then $p_1 =2$ and
$$\kappa(\mathcal{P}(C_n))=\phi(n)+2^{n_{1}-1}p_{2}^{n_{2}-1}\left[(p_2 -1)p_{3}^{n_{3}-1}+2\right].$$
Further, there is only one subset $X$ of $C_n$ with $|X|=\kappa(\mathcal{P}(C_n))$ such that $\mathcal{P}(\overline{X})$ is disconnected.
\end{theorem}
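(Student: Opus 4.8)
The upper bound is already contained in Theorem \ref{main}(ii). First I would record that $2\phi(p_1p_2)<p_1p_2$ forces $p_1=2$, since for $p_1\ge 3$ and $p_2>p_1$ one has $2(p_1-1)(p_2-1)\ge p_1p_2$; substituting $p_1=2$ and $\phi(p_1p_2)=p_2-1$ into the bound of Theorem \ref{main}(ii) collapses it to $\phi(n)+2^{n_1-1}p_2^{n_2-1}[(p_2-1)p_3^{n_3-1}+2]$. The separating set realizing this value is $X^{*}=\{x\in C_n:\ o(x)\mid 2^{n_1}p_2^{n_2}\ \text{or}\ 2^{n_1}p_2^{n_2}\mid o(x)\}\setminus\{x:\ o(x)=2^{n_1}p_2^{n_2}\}$; removing $X^{*}$ isolates the clique formed by the $\phi(2^{n_1}p_2^{n_2})$ elements of order $2^{n_1}p_2^{n_2}$, and the identity $\sum_{d\mid m}\phi(d)=m$ shows $|X^{*}|$ equals the stated value. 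It therefore remains to prove the matching lower bound and the uniqueness of the extremal set.

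The lower bound I would recast combinatorially. In $C_n$ two elements are adjacent in $\mathcal{P}(C_n)$ exactly when one of their orders divides the other, so $\mathcal{P}(C_n)$ is the comparability graph of the divisor lattice of $n$ in which each divisor $d$ is blown up into a clique of size $\phi(d)$ and comparable cliques are completely joined. For a minimum separating set $X$ each order-class lies entirely in one side of $\mathcal{P}(\overline{X})$ or entirely in $X$ (moving an $X$-part of a split class back to its side would only decrease $|X|$); hence $X$ determines two nonempty families $\mathcal{S},\mathcal{T}$ of divisors that are \emph{cross-incomparable} (no member of one divides or is divisible by a member of the other), with the universal classes $d=1$ and $d=n$ always in $X$. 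Since $\sum_{d\mid n}\phi(d)=n$, we have $|X|=n-\sum_{d\in\mathcal{S}}\phi(d)-\sum_{d\in\mathcal{T}}\phi(d)$, and the theorem becomes the assertion that $\sum_{\mathcal{S}}\phi+\sum_{\mathcal{T}}\phi$ is maximized precisely by $\mathcal{S}=\{2^{n_1}p_2^{n_2}\}$ and $\mathcal{T}=\{d\mid n:\ p_3\mid d,\ 2^{n_1}p_2^{n_2}\nmid d\}$.

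Encoding a divisor $2^ap_2^bp_3^c$ as the lattice point $(a,b,c)$ in the box $\{0,\dots,n_1\}\times\{0,\dots,n_2\}\times\{0,\dots,n_3\}$, the plan is to reduce this extremal problem to the one-parameter family in which one side is the top column $\mathcal{S}_k=\{(n_1,n_2,c):0\le c\le k\}$ and the other is the set $\mathcal{T}_k=\{(a,b,c):c\ge k+1,\ (a,b)\ne(n_1,n_2)\}$ of points incomparable to $\mathcal{S}_k$, with $0\le k\le n_3-1$. Writing $M=2^{n_1}p_2^{n_2}$ and using $\sum_{c=0}^{k}\phi(p_3^c)=p_3^{k}$ together with multiplicativity of $\phi$, a short calculation gives
\[
\sum_{d\in\mathcal{S}_k}\phi(d)+\sum_{d\in\mathcal{T}_k}\phi(d)=p_3^{n_3}\bigl(M-\phi(M)\bigr)+p_3^{k}\bigl(2\phi(M)-M\bigr),
\]
an affine function of $p_3^{k}$ with slope $2\phi(M)-M$. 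Since $p_1=2$ one computes $2\phi(M)-M=-2^{n_1}p_2^{n_2-1}<0$, so the total weight strictly decreases in $k$ and is maximized at $k=0$, recovering $\mathcal{S}=\{M\}$ and the value of the theorem; for $p_1\ge 3$ the slope has the sign of $2\phi(p_1p_2)-p_1p_2>0$ and the maximum would jump to $k=n_3-1$, consistent with the competing configuration optimal in Theorem \ref{main}(i).

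The main obstacle is the reduction itself: I must show that an arbitrary weight-maximizing cross-incomparable pair can be compressed, without decreasing total weight, to one supported on the single top column $(a,b)=(n_1,n_2)$. I would argue layer by layer in the $p_3$-coordinate, proving that within each layer the heaviest admissible configuration can be slid toward the corner $(n_1,n_2)$ and that the interaction constraints between layers reduce to the column family above; the restriction $r=3$ keeps the box three-dimensional, so only finitely many layer-profiles arise and the compression stays tractable. Uniqueness then follows because the slope inequality is strict and the compression strictly increases weight off the target, forcing $\mathcal{S}=\{M\}$ and hence the single separating set $X^{*}$.
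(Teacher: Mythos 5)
Your upper-bound half is sound and coincides with the paper's: your $X^{*}$ is exactly the set $Z(3,0)$ of Section \ref{upperbounds}, the isolation of the clique $E_{2^{n_1}p_2^{n_2}}$ is Proposition \ref{Z-disconn} specialized to $k=0$, and your count agrees with Proposition \ref{size-Z}. Your reformulation of the lower bound is also legitimate: that each order class $E_d$ lies wholly in $X$, $A$ or $B$ is Lemma \ref{simple-3} (with the remark following it), and the translation into maximizing $\sum_{\mathcal S}\phi+\sum_{\mathcal T}\phi$ over nonempty cross-incomparable families of divisors, together with your closed-form weight $p_3^{n_3}\bigl(M-\phi(M)\bigr)+p_3^{k}\bigl(2\phi(M)-M\bigr)$ on the column family $(\mathcal S_k,\mathcal T_k)$, is a correct repackaging of the paper's computation of $|Z(3,k)|$ and of the sign analysis that selects $k=0$ when $p_1=2$.

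The gap is the compression step, which you state as a plan rather than prove --- and it is the entire content of the theorem. You claim an arbitrary maximizing cross-incomparable pair ``can be slid toward the corner $(n_1,n_2)$ layer by layer without decreasing weight,'' but no such monotonicity holds in general: the paper's Example \ref{example} (with $n=2310$) exhibits an optimal configuration that is \emph{not} of column type, with $A=E_{30}$, beating every $Z(5,k)$. So sliding-toward-the-corner is simply false once more primes are present, and for $r=3$ it can only be rescued by quantitative estimates that genuinely use $p_1=2$, $p_2\geq 3$, $p_3\geq 5$ and the budget $|\Gamma|\leq 2^{n_1-1}p_2^{n_2-1}\left[p_3^{n_3-1}\phi(p_2)+2\right]$. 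That is precisely what the paper's Section \ref{last-sec} does across five counting propositions; for instance, Case 2 of Proposition \ref{r=3-e-m-2} must eliminate the configuration in which the small side $B$ consists of elements of orders $2^{n_1}p_2^{k}p_3^{n_3}$ with $k\leq n_2-2$ --- a ``column'' along the $p_2$-direction that your sliding heuristic does not obviously dominate, and whose exclusion rests on the explicit inequality $2^{n_1-1}p_2^{n_2-1}\left[p_3^{n_3-1}(p_3-p_2)-2\right]+2^{n_1}p_2^{t}p_3^{n_3-1}>0$. Until the compression lemma is actually formulated and carried out (all layer-profiles, not ``finitely many so tractable''), neither the lower bound nor the uniqueness claim is established; uniqueness in particular needs strictness in every compression move, not merely in the final slope comparison within the already-reduced one-parameter family.
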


We further note that, for certain values of $n$, equality may not hold in the bound obtained in Theorem \ref{main}(ii), see Example \ref{example} below. So it would be interesting to find $\kappa(\mathcal{P}(C_n))$ when $2\phi(p_1\ldots p_{r-1}) < p_1\ldots p_{r-1}$.

\section{Preliminaries}

Recall that $\phi$ is a multiplicative function, that is, $\phi(ab)=\phi(a)\phi(b)$ for any two positive integers $a,b$ which are relatively prime. So
$$\phi(n) = p_1^{n_1 -1}(p_1 -1) \ldots p_r^{n_r -1}(p_r -1)= p_1^{n_1 -1}\ldots p_r^{n_r -1}\phi(p_1 p_2\ldots p_r).$$

\begin{lemma}\label{simple-1}
For $1\leq i\leq r-1$, we have
$$\phi\left(\frac{n}{p_i}\right)\geq \phi\left(\frac{n}{p_r^{n_r}}\right)p_r^{n_r -1},$$
where the inequality is strict except when $r=2$, $p_1=2$, $p_2=3$ and $n_1\geq 2$.
\end{lemma}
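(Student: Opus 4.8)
The plan is to express both sides as multiples of $\phi(n)$ and then compare. First I would simplify the right-hand side. Since $\phi$ is multiplicative and $\gcd(n/p_r^{n_r},\,p_r^{n_r})=1$, we have $\phi(n)=\phi(n/p_r^{n_r})\,\phi(p_r^{n_r})=\phi(n/p_r^{n_r})\,p_r^{\,n_r-1}(p_r-1)$. Rearranging yields $\phi(n/p_r^{n_r})\,p_r^{\,n_r-1}=\phi(n)/(p_r-1)$, so the asserted inequality is equivalent to showing
\[
\phi\!\left(\frac{n}{p_i}\right)\ \geq\ \frac{\phi(n)}{p_r-1}\qquad\text{for each } 1\leq i\leq r-1.
\]

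Next I would bound the left-hand side through the chain $\phi(n/p_i)\geq \phi(n)/p_i\geq \phi(n)/(p_r-1)$. For the first inequality a short case analysis on $n_i$ suffices: if $n_i\geq 2$, then $p_i$ still divides $n/p_i$ and multiplicativity gives $\phi(n/p_i)=\phi(n)/p_i$ exactly; if $n_i=1$, then $p_i$ is removed and $\phi(n/p_i)=\phi(n)/(p_i-1)>\phi(n)/p_i$. Hence the first inequality is an equality precisely when $n_i\geq 2$. For the second inequality, since $i\leq r-1$ we have $p_i\leq p_{r-1}<p_r$, so $p_i\leq p_r-1$ and therefore $\phi(n)/p_i\geq \phi(n)/(p_r-1)$, with equality precisely when $p_i=p_r-1$.

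Finally I would combine the two equality conditions: the overall inequality is strict unless both $n_i\geq 2$ and $p_i=p_r-1$ hold simultaneously. The only delicate point---really the crux of the ``except when'' clause---is determining when $p_i=p_r-1$ can occur. As $p_i$ and $p_r$ are then consecutive integers that are both prime, and the unique such pair is $(2,3)$, we must have $p_i=2$ and $p_r=3$. Because the primes are listed in increasing order, $p_i=2$ forces $i=1$ while $p_r=3$ forces $r=2$; together with $n_i=n_1\geq 2$ this is exactly the stated exceptional case. In every other situation at least one of the two inequalities is strict, so the overall inequality is strict, which completes the argument.
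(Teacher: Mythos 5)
Your proof is correct and is essentially the paper's argument in a mildly repackaged form: both hinge on the case split $n_i=1$ versus $n_i\geq 2$ and reduce the comparison to $p_i\leq p_r-1$ (with $n_i=1$ giving strictness via $\phi(p_i)=p_i-1<p_i$); your normalization of both sides to multiples of $\phi(n)$ is equivalent to the paper's direct factor-by-factor expansion. One small plus on your side: you spell out the equality analysis (consecutive primes forcing $(p_i,p_r)=(2,3)$, hence $i=1$, $r=2$, $n_1\geq 2$) that the paper dismisses with ``can be seen easily.''
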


\begin{proof}
Let $1\leq i\leq r-1$. We have $\phi(p_r)=p_r -1\geq p_{i}>p_{i} -1 =\phi(p_{i})$. If $n_i=1$, then
\begin{align*}
\phi\left(\frac{n}{p_i}\right) & = p_1^{n_1 -1}\ldots p_{i-1}^{n_{i-1}-1}p_{i+1}^{n_{i+1}-1}\ldots p_r^{n_r -1} \phi(p_1 \ldots p_{i-1} p_{i+1}\ldots p_{r})\\
 & > p_1^{n_1 -1}\ldots p_{i-1}^{n_{i-1}-1}p_{i+1}^{n_{i+1}-1}\ldots p_r^{n_r -1} \phi(p_1 \ldots p_{i} \ldots p_{r-1}) = \phi\left(\frac{n}{p_r^{n_r}}\right)p_r^{n_r -1}.
\end{align*}
If $n_i\geq 2$, then
\begin{align*}
\phi\left(\frac{n}{p_i}\right) & = p_1^{n_1 -1}\ldots p_{i-1}^{n_{i-1}-1}p_i^{n_i -2}p_{i+1}^{n_{i+1}-1}\ldots p_r^{n_r -1} \phi(p_1 p_2\ldots p_{r})\\
 & \geq p_1^{n_1 -1}\ldots p_i^{n_i -1}\ldots p_{r-1}^{n_{r-1} -1} p_r^{n_r -1} \phi(p_1 p_2 \ldots p_{r-1})=\phi\left(\frac{n}{p_r^{n_r}}\right)p_r^{n_r -1}.
\end{align*}
The last part of the lemma can be seen easily. Note that $\phi(p_r)> p_{i}$ for $r\geq 3$.
\end{proof}

\begin{lemma}\label{simple-4}
We have $\phi\left(p_{1}p_2\ldots p_{t}\right)- p_1 p_2 \ldots p_{t} +\underset{k=1}{\overset{t}{\sum}} \frac{p_1 p_2 \ldots p_{t}}{p_k} \geq 0$ for all $t\geq 1$. Further, equality holds if and only if $t=1$.
\end{lemma}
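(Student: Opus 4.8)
The plan is to normalize the inequality and then run a short induction on $t$. Writing $P=p_1p_2\ldots p_t$ and dividing the whole expression by $P>0$, the claim becomes
$$\prod_{k=1}^{t}\left(1-\frac{1}{p_k}\right)+\sum_{k=1}^{t}\frac{1}{p_k}\geq 1,$$
where I have used $\phi(P)/P=\prod_{k=1}^{t}(1-1/p_k)$ from the identity recalled at the start of this section, together with $\frac{1}{P}\cdot\frac{P}{p_k}=\frac{1}{p_k}$. Setting $x_k=1/p_k$, each $x_k$ lies in $(0,1)$ because $p_k\geq 2$, and it suffices to show that $g(t):=\prod_{k=1}^{t}(1-x_k)+\sum_{k=1}^{t}x_k-1$ satisfies $g(t)\geq 0$, with equality exactly when $t=1$.

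For the base case $t=1$ one checks $g(1)=(1-x_1)+x_1-1=0$, matching the asserted equality. For the inductive step I would track how $g$ changes when a new prime is adjoined. Writing $Q_t=\prod_{k=1}^{t}(1-x_k)$, a direct computation gives
$$g(t+1)-g(t)=Q_t(1-x_{t+1})+x_{t+1}-Q_t=x_{t+1}\left(1-Q_t\right).$$
Since $0<1-x_k<1$ for every $k$, we have $0<Q_t<1$ for all $t\geq 1$, and $x_{t+1}>0$; hence $g(t+1)-g(t)>0$. Thus $g$ is strictly increasing, and combined with $g(1)=0$ this yields $g(t)>0$ for every $t\geq 2$ while $g(t)=0$ only for $t=1$. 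Multiplying back through by $P$ recovers the stated inequality together with the equality condition.

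The argument is essentially a telescoping estimate, so there is no serious obstacle; the only point requiring a little care is recognizing that the increment $g(t+1)-g(t)$ factors as $x_{t+1}(1-Q_t)$, after which positivity is immediate from $p_k\geq 2$. An alternative route would be to expand $\prod_{k}(1-x_k)$ by inclusion--exclusion and identify $g(t)$ with an alternating sum of the elementary symmetric functions of the $x_k$, but the inductive computation above seems cleaner and makes the equality case fully transparent.
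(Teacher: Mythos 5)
Your proof is correct and is essentially the paper's argument in normalized form: after dividing by $P=p_1\cdots p_t$, your increment identity $g(t+1)-g(t)=x_{t+1}(1-Q_t)$ is exactly the paper's inductive recursion $F(t)=p_t\,F(t-1)+\bigl[p_1\cdots p_{t-1}-\phi(p_1\cdots p_{t-1})\bigr]$ divided through by $p_1\cdots p_t$, with $1-Q_t>0$ playing the role of $p_1\cdots p_{t-1}>\phi(p_1\cdots p_{t-1})$. The only cosmetic difference is that your normalization lets you anchor the induction at $t=1$ with $g(1)=0$ instead of verifying $t=2$ separately as the paper does, and it handles the equality case in the same stroke.
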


\begin{proof}
Clearly, equality holds if $t=1$.
So let $t\geq 2$. We shall prove by induction on $t$ that strict inequality holds.
Since $\phi\left(p_{1}p_2\right)- p_1 p_2+ p_1 +p_2 =1 > 0$, the statement is true for $t=2$. Assume that $t\geq 3$.
Then
\begin{eqnarray*}
& & \phi\left(p_{1}p_2\ldots p_{t}\right)- p_1 p_2 \ldots p_{t} + \underset{k=1}{\overset{t}{\sum}} \frac{p_1 p_2 \ldots p_{t}}{p_k}\\
& =& \phi\left(p_{1}\ldots p_{t-1}\right)(p_{t}-1)- p_1 \ldots p_{t-1}p_{t} + \left[ \underset{k=1}{\overset{t-1}{\sum}} \frac{p_1 p_2 \ldots p_{t-1}}{p_k}\right]\times p_t + p_1 p_2 \ldots p_{t-1}\\
&= & p_t\times \left[\phi\left(p_{1}\ldots p_{t-1}\right)- p_1 \ldots p_{t-1} +\underset{k=1}{\overset{t-1}{\sum}} \frac{p_1 \ldots p_{t-1}}{p_k}\right] + p_1\ldots p_{t-1} - \phi\left(p_{1}\ldots p_{t-1}\right)\\
& > & \phi\left(p_{1}\ldots p_{t-1}\right)- p_1 \ldots p_{t-1} +\underset{k=1}{\overset{t-1}{\sum}} \frac{p_1 \ldots p_{t-1}}{p_k} > 0.
\end{eqnarray*}
In the above, the last inequality follows using the induction hypothesis.
\end{proof}

For $x\in C_n$, we denote by $o(x)$ the order of $x$. If two elements $x,y$ are adjacent in $\mathcal{P}(C_n)$, then $o(x)\mid o(y)$ or $o(y)\mid o(x)$ according as $x\in\langle y\rangle$ or $y\in\langle x\rangle$. The converse statement is also true (which does not hold for an arbitrary finite group), which follows from the property that $C_n$ has a unique subgroup of order $d$ for every positive divisor $d$ of $n$. We shall use this fact frequently without mention. For $x\in C_n$, let $N(x)$ be the neighborhood of $x$ in $\mathcal{P}(C_n)$, that is, the set of all elements of $C_n$ which are adjacent to $x$. If $o(x)=o(y)$ for $x,y\in C_n$, then it is clear that $N(x)\cup \{x\}=N(y)\cup \{y\}$, also see \cite[Lemma 3]{dooser}.

Let $X$ be a subset of $C_n$. For two disjoint nonempty subsets $A$ and $B$ of $\overline{X}$, we say that $A\cup B$ is a {\it separation} of $\mathcal{P}(\overline{X})$ if $\overline{X} =A\cup B$ and there is no edge containing vertices from both $A$ and $B$. Thus $\mathcal{P}(\overline{X})$ is disconnected if and only if there exists a separation of it. For a positive divisor $d$ of $n$, we define the following two sets:
\begin{enumerate}
\item[] $E_d$ = the set of all elements of $C_n$ whose order is $d$,
\item[] $S_d$ = the set of all elements of $C_n$ whose order divides $d$.
\end{enumerate}
Then $S_d$ is a cyclic subgroup of $C_n$ of order $d$ and $E_d$ is precisely the set of generators of $S_d$. So
$\left\vert S_d\right\vert =d$ and $\left\vert E_d\right\vert =\phi(d).$ The following result is very useful throughout the paper.

\begin{lemma}\label{simple-3}
Let $X$ be a subset of $C_n$ of minimum possible size with the property that $\mathcal{P}(\overline{X})$ is disconnected. Then either $E_d\subseteq X$ or $E_d\cap X$ is empty for each divisor $d$ of $n$.
\end{lemma}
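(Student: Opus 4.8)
The plan is to argue by contradiction, exploiting the minimality of $X$ together with the fact recorded just before the statement: if $o(x)=o(y)$ then $N(x)\cup\{x\}=N(y)\cup\{y\}$. Suppose, to the contrary, that some divisor $d$ of $n$ has $E_d$ meeting both $X$ and $\overline{X}$, and fix $x\in E_d\cap X$ and $y\in E_d\cap\overline{X}$. Since $o(x)=o(y)=d$, both $x$ and $y$ generate the same cyclic subgroup $S_d$, so they are adjacent and, crucially, $N(x)\cup\{x\}=N(y)\cup\{y\}$. The goal is to reinstate $x$ into the graph and show that a strictly smaller separating set still works, contradicting minimality.

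Because $X$ is of minimum size with $\mathcal{P}(\overline{X})$ disconnected, there is a separation $A\cup B$ of $\mathcal{P}(\overline{X})$. Since $y\in\overline{X}$, I may assume without loss of generality that $y\in A$. I then set $X'=X\setminus\{x\}$, so that $\overline{X'}=\overline{X}\cup\{x\}$ and $|X'|=|X|-1$, and aim to produce a separation of $\mathcal{P}(\overline{X'})$.

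The key step is to pin down the neighbors of $x$ inside $\overline{X'}$. For any $z\in\overline{X}$ with $z\neq x$, the identity $N(x)\cup\{x\}=N(y)\cup\{y\}$ gives $z\in N(x)$ if and only if $z\in N(y)$ or $z=y$. As $A\cup B$ is a separation and $y\in A$, every neighbor of $y$ lying in $\overline{X}$ belongs to $A$; combined with $y\in A$ itself, this forces every neighbor of $x$ in $\overline{X'}$ to lie in $A$. Hence, taking $A'=A\cup\{x\}$ and $B'=B$, no edge of $\mathcal{P}(\overline{X'})$ joins $A'$ to $B'$: there are no edges between $A$ and $B$ by hypothesis, and the only new vertex $x$ sends all of its edges into $A\subseteq A'$. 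Since $B'=B$ and $A'\supseteq A$ are both nonempty and $A'\cup B'=\overline{X'}$, this exhibits a separation of $\mathcal{P}(\overline{X'})$.

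Thus $\mathcal{P}(\overline{X'})$ is disconnected with $|X'|<|X|$, contradicting the minimality of $X$. Therefore no divisor $d$ can split $E_d$ across $X$ and $\overline{X}$, which is precisely the claim. The only delicate point is the neighbor computation in the third step: one must apply the closed-neighborhood identity carefully—keeping track that $x$ lies in $X$ (hence outside $\overline{X}$) while $y$ lies in $\overline{X}$—to be certain that reinserting $x$ introduces no edge crossing into $B$. The remaining bookkeeping about nonemptiness and disjointness is routine.
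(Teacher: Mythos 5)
Your proof is correct and uses essentially the same two ingredients as the paper's: the closed-neighborhood identity $N(x)\cup\{x\}=N(y)\cup\{y\}$ for elements of equal order, and the minimality of $X$ applied to $X\setminus\{x\}$. The only difference is organizational: the paper invokes minimality to conclude that $\mathcal{P}(\overline{X\setminus\{x\}})$ is connected, forcing $x$ (hence $y$) to have neighbors in both $A$ and $B$ and contradicting the separation, whereas you run the same argument in the other direction by showing all neighbors of $x$ land in $y$'s part and exhibiting the explicit separation $(A\cup\{x\})\cup B$ of $\mathcal{P}(\overline{X\setminus\{x\}})$, contradicting minimality directly.
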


\begin{proof}
Suppose that $E_d\cap X\neq \emptyset$. We show that $E_d\subseteq X$. Fix a separation $A\cup B$ of $\mathcal{P}(\overline{X})$. Let $x\in E_d\cap X$. The minimality of $|X|$ implies that the subgraph $\mathcal{P}(\overline{X\setminus\{x\}})$ of $\mathcal{P}(C_n)$ is connected. So there exist $a\in A$ and $b\in B$ such that $x$ is adjacent to both $a$ and $b$.

Suppose that there exists an element $y\in E_d$ which is not in $X$. Then $y\neq x$. Since $o(x)=o(y)=d$, we have $N(x)\cup \{x\}=N(y)\cup \{y\}$. If $y\in A$ (respectively, $y\in B$), then the fact that $x$ is adjacent to $b$ (respectively, to $a$) implies $y$ is adjacent to $b$ (respectively, to $a$). This contradicts that $A\cup B$ is a separation of $\mathcal{P}(\overline{X})$.
\end{proof}

\begin{remark}
Under the hypothesis of Lemma \ref{simple-3}, it follows that there are three possibilities for the set $E_d$, where $d$ is a divisor of $n$: either $E_d\subseteq X$, $E_d\subseteq A$ or $E_d\subseteq B$, where $A\cup B$ is any separation of $\mathcal{P}(\overline{X})$.
\end{remark}

We complete this section with the following lemma. For a given subset $\{i_1,i_2,\ldots, i_k\}$ of $\{1,2,\ldots, r\}$, we define the integer $m_{i_1, i_2,\ldots, i_k}$ by
$$m_{i_1, i_2,\ldots ,i_k}=\frac{n}{p_{i_1}p_{i_2}\ldots p_{i_k}}.$$

\begin{lemma}\label{iff}
$\kappa (\mathcal{P}(C_{n}))= \phi(n)+1$ if and only if $r=2$ and $n=p_{1}p_{2}$.
\end{lemma}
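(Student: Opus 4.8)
The plan is to reduce the whole statement to deciding exactly when the set $A(C_n)$ of full-degree vertices is itself a vertex cut. Recall that $A(C_n)=E_1\cup E_n$ consists of the identity together with the $\phi(n)$ generators, so $|A(C_n)|=\phi(n)+1$, and (\ref{eqn-0}) already gives $\kappa(\mathcal{P}(C_n))\geq\phi(n)+1$. The crucial observation is that \emph{every} separating set must contain all of $A(C_n)$: if some $v\in A(C_n)$ were left outside a separating set $X$, then $v$ would lie in one side $A$ of a separation $A\cup B$ of $\mathcal{P}(\overline{X})$, yet being adjacent to every other vertex, $v$ is adjacent to all of the nonempty set $B$, contradicting that $A\cup B$ is a separation. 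Hence a cut of the minimum possible size $\phi(n)+1$ can only be $A(C_n)$ itself, and therefore $\kappa(\mathcal{P}(C_n))=\phi(n)+1$ if and only if $\mathcal{P}(\overline{A(C_n)})$ is disconnected.

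Next I would translate this into a question about divisors. Since $C_n$ is cyclic, two vertices are adjacent precisely when the order of one divides the order of the other, so the induced graph $\mathcal{P}(\overline{A(C_n)})$ is completely controlled by the divisibility relation among the proper nontrivial divisors $D=\{d:d\mid n,\ 1<d<n\}$: for each $d\in D$ the set $E_d$ is a nonempty clique, and every vertex of $E_d$ is adjacent to every vertex of $E_{d'}$ exactly when $d\mid d'$ or $d'\mid d$. Consequently $\mathcal{P}(\overline{A(C_n)})$ is connected if and only if the graph on vertex set $D$, with $d$ adjacent to $d'$ whenever one divides the other, is connected.

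It then remains to prove the dichotomy. If $n=p_1p_2$ then $D=\{p_1,p_2\}$, and since neither of $p_1,p_2$ divides the other, $\mathcal{P}(\overline{A(C_n)})$ splits into the two components $E_{p_1}$ and $E_{p_2}$, so by the first paragraph $\kappa(\mathcal{P}(C_n))=\phi(n)+1$. In every remaining case I would exhibit a connected divisor graph. Each prime $p_i$ lies in $D$ because $r\geq 2$. For $i\in\{2,\ldots,r\}$ the product $p_1p_i$ also lies in $D$: when $r\geq 3$ this holds because $n$ carries a third prime factor and so $n>p_1p_i$, while for $r=2$ it holds because $n\neq p_1p_2$ forces $n>p_1p_2$. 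Since $p_1p_i$ is adjacent to both $p_1$ and $p_i$, all the primes $p_1,\ldots,p_r$ lie in one component. Finally every $d\in D$ has a prime divisor $p_i$, so $p_i\mid d$ places $d$ in the component of $p_i$, hence of $p_1$. Thus $D$ is connected, $\mathcal{P}(\overline{A(C_n)})$ is connected, and $\kappa(\mathcal{P}(C_n))>\phi(n)+1$.

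The step needing the most care is the case analysis guaranteeing $p_1p_i\in D$, i.e.\ verifying $p_1p_i<n$ outside the single exceptional case $n=p_1p_2$; this is where the hypotheses $r\geq 3$ or $n_1+n_2\geq 3$ enter, and is the only place where the precise shape of $n$ matters. The remaining ingredients---the full-degree-vertex argument of the first paragraph and the reduction to divisibility---are short and structural, and are consistent with Lemma \ref{simple-3}, which already guarantees that a minimum cut meets each $E_d$ in all or nothing.
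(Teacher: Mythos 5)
Your proof is correct, and while it shares the paper's essential first step---observing that every disconnecting set must contain all of $A(C_n)=E_1\cup E_n$, so that $\kappa(\mathcal{P}(C_n))=\phi(n)+1$ holds precisely when $\mathcal{P}(\overline{E_1\cup E_n})$ is disconnected---it diverges from the paper's proof in two ways. First, for the direction $n=p_1p_2\Rightarrow \kappa(\mathcal{P}(C_n))=\phi(n)+1$ the paper simply cites \cite[Theorem 3(ii)]{sri}, whereas you derive it from your equivalence (the proper nontrivial divisors $p_1,p_2$ admit no divisibility relation, so $E_{p_1}$ and $E_{p_2}$ form a separation), which makes your argument self-contained. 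Second, your connectivity verification goes through a quotient reduction: you pass to the graph on the set $D$ of proper nontrivial divisors of $n$ under divisibility---legitimate because each $E_d$ is a nonempty clique and adjacency between distinct classes $E_d$, $E_{d'}$ is all-or-nothing---and then connect everything \emph{bottom-up}, joining the primes $p_1,\ldots,p_r$ through the divisors $p_1p_i$ and attaching each $d\in D$ to a prime dividing it; the case check you flag (that $p_1p_i<n$ whenever $n\neq p_1p_2$) is exactly where the hypothesis enters. The paper instead works \emph{top-down} with the maximal proper divisors: it routes any vertex of $\overline{X}$ into $E_{m_j}$ for some $m_j=n/p_j$ and links $E_{m_j}$ to $E_{m_k}$ through $E_{m_{j,k}}$, where the nonemptiness of $E_{m_{j,k}}$ outside $X$ (i.e.\ $m_{j,k}>1$) plays the role dual to your check that $p_1p_i<n$. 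Both routes are rigorous; yours buys independence from the external citation and a clean combinatorial reformulation of the problem on the divisor lattice, while the paper's is marginally shorter given the cited result.
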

\begin{proof}
If $n=p_{1}p_{2}$, then $\kappa (\mathcal{P}(C_{n}))= \phi(n)+1$ by \cite[Theorem 3(ii)]{sri}. For the converse part, let $X=E_{1}\cup E_n$. It is enough to show that $\mathcal{P}(\overline{X})$ is connected whenever $r\geq 3$, or $r=2$ and one of $n_{1}, n_{2}$ is at least $2$.

Let $x$ and $y$ be two distinct elements of $\overline{X}$. Then $x\in S_{m_j}$ and $y\in S_{m_k}$ for some $j, k\in\{1,2,\ldots, r\}$. So $x$ (respectively, $y$) is adjacent with the elements of $E_{m_j}\setminus \{x\}$ (respectively, $E_{m_k}\setminus\{y\}$). If $j=k$, then $x$ and $y$ are connected through the elements of $E_{m_j}$. Assume that $j\neq k$. Since $r\geq 3$, or  $r=2$ and one of $n_{1}, n_{2}$ is at least $2$, the set $E_{m_{j,k}}$ is non-empty. Then the elements of both $E_{m_j}$ and $E_{m_k}$ are adjacent with the elements of $E_{m_{j,k}}$. It follows that $x$ and $y$ are connected by a path.
\end{proof}

\section{Upper bounds and Proof of Theorem \ref{main}(ii) and (iii)}\label{upperbounds}

We shall prove the bounds in Theorem \ref{main} by identifying suitable subsets $X$ of $C_n$ of required size such that $\mathcal{P}(\overline{X})$ is disconnected. Let $0\leq k\leq n_r -1$. Define the following integers:
$$\alpha_{k}=p_{1}^{n_{1}}p_{2}^{n_{2}}\ldots p_{r-1}^{n_{r-1}}p_{r}^{k}.$$
So $\alpha_k=\alpha_0 p_r^k$. For given $k$ and subset $\{i_1,i_2,\ldots, i_l\}$ of $\{1,2,\ldots, r-1\}$, define the integer $\beta_{k,i_1, i_2,\ldots, i_l}$ by
$$\beta_{k,i_1, i_2,\ldots, i_l}  = \frac{\alpha_k}{p_{i_1} \ldots p_{i_l}}=\frac{p_1^{n_1}\ldots p_{r-1}^{n_{r-1}}p_{r}^{k}}{p_{i_1} \ldots p_{i_l}}.$$
Set
$$Z(r,k)=E_{\alpha_{k+1}}\cup \ldots\cup E_{\alpha_{n_r -1}}\cup E_{n}\cup S_{\beta_{k,1}}\cup \ldots \cup S_{\beta_{k,r-1}}.$$

\begin{proposition}\label{Z-disconn}
The subgraph $\mathcal{P}(\overline{Z(r,k)})$ of $\mathcal{P}(C_n)$ is disconnected.
\end{proposition}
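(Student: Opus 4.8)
The plan is to translate the whole statement into a condition on element orders. Recall that in $C_n$ two elements are adjacent precisely when the order of one divides the order of the other, and that $\overline{Z(r,k)}$ is the union of the sets $E_d$ over those divisors $d$ of $n$ for which $E_d\cap Z(r,k)=\emptyset$. So the first step is to determine, for each divisor $d=p_1^{a_1}\cdots p_r^{a_r}$ of $n$, whether $E_d$ lies inside $Z(r,k)$ or is disjoint from it, using that by Lemma \ref{simple-3}-type reasoning an $E_d$ cannot be split.

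By construction $E_d$ is one of $E_{\alpha_{k+1}},\ldots,E_{\alpha_{n_r-1}},E_n$ iff $d=\alpha_j$ for some $j\in\{k+1,\ldots,n_r\}$, that is, iff $a_i=n_i$ for all $i<r$ and $a_r\ge k+1$. On the other hand $E_d\subseteq S_{\beta_{k,i}}$ for some $i\le r-1$ iff $d\mid \alpha_k/p_i$ for some such $i$, which (since $a_j\le n_j$ holds automatically) amounts to $a_r\le k$ together with $a_i<n_i$ for at least one $i<r$. Writing $P$ for the condition ``$a_i=n_i$ for every $i<r$'' and $Q$ for ``$a_r\ge k+1$'', these two cases say exactly that $E_d\subseteq Z(r,k)$ iff $P$ and $Q$ both hold or both fail; equivalently $E_d\subseteq\overline{Z(r,k)}$ iff exactly one of $P,Q$ holds.

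This immediately suggests the separation. I would set
$$A=\{x\in\overline{Z(r,k)}:p_r^{k+1}\mid o(x)\},\qquad B=\{x\in\overline{Z(r,k)}:p_r^{k+1}\nmid o(x)\}.$$
By the previous step every $x\in A$ has order with $a_r\ge k+1$ but not all of $a_1,\ldots,a_{r-1}$ maximal (the case ``$Q$ and not $P$''), while every $x\in B$ has order $\alpha_j$ with $j\le k$ (the case ``$P$ and not $Q$''); in particular $\overline{Z(r,k)}=A\cup B$ is a disjoint union. Both are nonempty, since $E_{p_r^{k+1}}\subseteq A$ and $E_{\alpha_k}\subseteq B$ (one checks these sets indeed avoid $Z(r,k)$). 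It then remains to rule out edges between $A$ and $B$: for $x\in A$ of order $d$ and $y\in B$ of order $e=\alpha_j$ with $j\le k$, adjacency would force $d\mid e$ or $e\mid d$. The first fails because the $p_r$-exponent of $d$ is at least $k+1>j$, and the second fails because $e$ carries the full exponents $n_1,\ldots,n_{r-1}$ whereas $d$ does not. Hence $A\cup B$ is a separation and $\mathcal{P}(\overline{Z(r,k)})$ is disconnected.

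The only genuine work lies in the first two steps, namely pinning down exactly which $E_d$ sit inside $Z(r,k)$ and observing that the answer has the clean ``exactly one of $P,Q$'' (symmetric-difference) shape. Once that characterization is secured, splitting $\overline{Z(r,k)}$ according to the $p_r$-part of the order is forced, and the non-adjacency verification is routine: each of the two blocked divisibility directions corresponds to precisely one of the two defining conditions.
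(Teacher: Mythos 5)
Your proof is correct and takes essentially the same approach as the paper: both partition $\overline{Z(r,k)}$ into the elements whose order is $\alpha_s$ with $0\leq s\leq k$ and those whose order has $p_r$-exponent at least $k+1$ but fails to have full exponent $n_i$ in some $p_i$ with $i<r$, and then rule out both divisibility directions exactly as you do. The only cosmetic difference is that you derive this classification through the explicit ``exactly one of $P,Q$'' characterization of which $E_d$ lie in $Z(r,k)$ (for which the appeal to Lemma~\ref{simple-3}-type reasoning is unnecessary, since $Z(r,k)$ is by definition a union of full sets $E_d$), whereas the paper simply states the two order types as an observation.
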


\begin{proof}
For $x\in \overline{Z(r,k)}$, observe that the order $o(x)$ of $x$ is one of the following two types:
\begin{enumerate}
\item[(I)] $o(x)=p_1^{n_1}p_2^{n_2} \ldots p_{r-1}^{n_{r-1}}p_{r}^{s}$, where $0\leq s\leq k$;
\item[(II)] $o(x)=p_1^{l_1}\ldots p_{r-1}^{l_{r-1}}p_r^{t}$, where $k+1\leq t\leq n_r$, $0\leq l_i\leq n_i$ for $i\in\{1,\ldots, r-1\}$ and $(l_1,\ldots,l_{r-1})\neq (n_1,\ldots, n_{r-1})$.
 \end{enumerate}
Let $A$ (respectively, $B$) be the subset of $\overline{Z(r,k)}$ consisting of all the elements whose order is of type (I) (respectively, of type (II)).
Then $A,B$ are nonempty sets and $\overline{Z(r,k)} = A\cup B$ is a disjoint union. Since $t>s$, no element of $B$ can be obtained as an integral power of any element of $A$. Again, since $(l_1,\ldots,l_{r-1})\neq (n_1,\ldots, n_{r-1})$, no element of $A$ can be obtained as an integral power of any element of $B$. It follows that $A\cup B$ is a separation of $\mathcal{P}(\overline{Z(r,k)})$.
\end{proof}

\begin{proposition}\label{size-Z}
The number of elements in $Z(r,k)$ is given by:
$$|Z(r,k)| =\phi(n) + \beta_{0,1,\ldots, r-1} \left[p_r^{n_r -1}\phi(p_1\ldots p_{r-1})+p_r^k\left[p_1\ldots p_{r-1} -2 \phi(p_1\ldots p_{r-1})\right]\right].$$
\end{proposition}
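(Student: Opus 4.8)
The plan is to split $Z(r,k)$ into two blocks that are disjoint, compute each cardinality separately, and then simplify. First I would observe that every element of $E_{\alpha_{k+1}}\cup\cdots\cup E_{\alpha_{n_r-1}}\cup E_n$ has order whose $p_r$-part is at least $p_r^{k+1}$ (recall $n=\alpha_0 p_r^{n_r}$ with $n_r\geq k+1$), whereas every element of $S_{\beta_{k,1}}\cup\cdots\cup S_{\beta_{k,r-1}}$ has order dividing some $\beta_{k,i}=\alpha_k/p_i=p_1^{n_1}\cdots p_i^{n_i-1}\cdots p_{r-1}^{n_{r-1}}p_r^{k}$, hence $p_r$-part at most $p_r^{k}$. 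Consequently the two blocks are disjoint, and $|Z(r,k)|$ is the sum of their cardinalities.

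For the first block, the sets $E_{\alpha_{k+1}},\ldots,E_{\alpha_{n_r-1}},E_n=E_{\alpha_{n_r}}$ are pairwise disjoint, since elements of distinct orders lie in distinct $E$-sets. Using $|E_d|=\phi(d)$ and the multiplicativity of $\phi$ recorded at the start of Section 2, each term equals $\phi(\alpha_0)\,p_r^{s-1}(p_r-1)$ for $\alpha_s$ with $s\geq 1$, so the total is a geometric series that telescopes to $\phi(\alpha_0)\bigl(p_r^{n_r}-p_r^{k}\bigr)$.

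For the second block I would apply inclusion–exclusion to the subgroups $S_{\beta_{k,1}},\ldots,S_{\beta_{k,r-1}}$. The crucial point is that in a cyclic group $S_a\cap S_b=S_{\gcd(a,b)}$, and more generally the intersection over a subset $\{i_1,\ldots,i_l\}\subseteq\{1,\ldots,r-1\}$ equals $S_{\beta_{k,i_1,\ldots,i_l}}$, because $\gcd\bigl(\alpha_k/p_{i_1},\ldots,\alpha_k/p_{i_l}\bigr)=\alpha_k/(p_{i_1}\cdots p_{i_l})$. Since $|S_d|=d$, inclusion–exclusion yields an alternating sum of the $\beta_{k,i_1,\ldots,i_l}$ that factors as $\alpha_k\bigl[1-\prod_{i=1}^{r-1}(1-1/p_i)\bigr]=\alpha_k\bigl[p_1\cdots p_{r-1}-\phi(p_1\cdots p_{r-1})\bigr]/(p_1\cdots p_{r-1})$, which equals $\beta_{0,1,\ldots,r-1}\,p_r^{k}\bigl[p_1\cdots p_{r-1}-\phi(p_1\cdots p_{r-1})\bigr]$ after using $\alpha_k/(p_1\cdots p_{r-1})=\beta_{0,1,\ldots,r-1}\,p_r^{k}$.

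Finally I would add the two contributions, write $\phi(\alpha_0)=\beta_{0,1,\ldots,r-1}\,\phi(p_1\cdots p_{r-1})$, and split $p_r^{n_r}=p_r^{n_r-1}(p_r-1)+p_r^{n_r-1}$ so as to peel off the term $\beta_{0,1,\ldots,r-1}\,\phi(p_1\cdots p_{r-1})\,p_r^{n_r-1}(p_r-1)=\phi(\alpha_0)\,p_r^{n_r-1}(p_r-1)=\phi(n)$; collecting the remaining terms gives exactly the stated expression. I expect the main obstacle to be the bookkeeping in the inclusion–exclusion step—identifying each intersection with the correct $\beta$ and collapsing the alternating sum of reciprocals of the $p_i$ into $\prod_{i=1}^{r-1}(1-1/p_i)$—rather than any conceptual difficulty, the two disjointness and telescoping observations being straightforward.
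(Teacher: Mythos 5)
Your proposal is correct and follows essentially the same route as the paper's proof: pairwise disjointness of the $E$-sets and their disjointness from the union of the $S_{\beta_{k,i}}$, a geometric sum of $\phi(\alpha_0)\phi(p_r^s)$ terms, and inclusion--exclusion over the subgroups using $S_a\cap S_b=S_{\gcd(a,b)}$. The only cosmetic differences are that you absorb $E_n$ into the telescoping sum (the paper keeps $|E_n|=\phi(n)$ as a separate summand) and you justify the disjointness and the intersection identities a bit more explicitly.
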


\begin{proof}
The sets $E_{\alpha_{k+1}}, \ldots, E_{\alpha_{n_r -1}}, E_n$ are pairwise disjoint and each of them is disjoint from $\underset{i=1}{\overset{r-1}{\bigcup}} S_{\beta_{k,i}}$. So
$$|Z(r,k)|=\left\vert E_{n}\right\vert + \underset{i=k+1}{\overset{n_r-1}{\bigcup}} \left\vert E_{\alpha_{i}}\right\vert+\left\vert \underset{i=1}{\overset{r-1}{\bigcup}} S_{\beta_{k,i}}\right\vert.$$
We have
\begin{align*}
\underset{i=k+1}{\overset{n_r-1}{\bigcup}} \left\vert E_{\alpha_{i}}\right\vert & = \phi(\alpha_0)\left[\phi\left(p_r^{k+1}\right)+\phi\left(p_r^{k+2}\right)+\ldots + \phi\left(p_r^{n_r -1}\right)\right] \\
&  =  p_1^{n_1 -1}p_2^{n_2 -1} \ldots p_{r-1}^{n_{r-1} -1}\phi(p_1p_2\ldots p_{r-1})(p_r^{n_r -1} -p_r^{k}).
\end{align*}
and
\begin{align*}
\left\vert \underset{i=1}{\overset{r-1}{\bigcup}} S_{\beta_{k,i}}\right\vert & = \underset{i}{\sum} \left\vert S_{\beta_{k,i}}\right\vert  - \underset{i<j}{\sum} \left\vert S_{\beta_{k,i}}\cap S_{\beta_{k,j}}\right\vert
+ \ldots + (-1)^{r-2} \left\vert S_{\beta_{k,1}}\cap \ldots \cap S_{\beta_{k,r-1}}\right\vert \\
 & = \underset{i}{\sum} \beta_{k,i} - \underset{i<j}{\sum} \beta_{k,i,j} + \ldots + (-1)^{r-2}\beta_{k,1,2,\ldots, r-1}\\
 &  =  p_1^{n_1 -1}\ldots p_{r-1}^{n_{r-1} -1}p_r^k\left[\underset{i=1}{\overset{r-1}{\sum}}\frac{p_{1}\ldots p_{r-1}}{p_{i}} - \underset{i<j}{\sum}\frac{p_{1}\ldots p_{r-1}}{p_{i}p_{j}}+\ldots +(-1)^{r-2}\right]\\
 & = p_1^{n_1 -1} \ldots p_{r-1}^{n_{r-1} -1}p_r^k\left[p_1p_2\ldots p_{r-1}-\phi(p_1p_2\ldots p_{r-1})\right].
\end{align*}
Now the lemma follows from the above, as $\beta_{0,1,\ldots, r-1}=p_1^{n_1 -1} \ldots p_{r-1}^{n_{r-1} -1}$.
\end{proof}

In the next section, we shall have occasions to calculate the cardinality of the union of certain subgroups, which will be similar to that of calculating $\left\vert \underset{i=1}{\overset{r-1}{\bigcup}} S_{\beta_{k,i}}\right\vert$ as in the above. As a consequence of Propositions \ref{Z-disconn} and \ref{size-Z}, we have the following.

\begin{corollary}
The vertex connectivity $\kappa(\mathcal{P}(C_n))$ of $\mathcal{P}(C_n))$ satisfies the following:
$$\kappa(\mathcal{P}(C_n))\leq \phi(n) + p_1^{n_1 -1}\ldots p_{r-1}^{n_{r-1}-1} \left[p_r^{n_r -1}\phi(p_1\ldots p_{r-1})+p_r^k\left[p_1\ldots p_{r-1} -2 \phi(p_1\ldots p_{r-1})\right]\right].$$
\end{corollary}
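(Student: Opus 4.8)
The plan is to read the corollary off directly from Propositions \ref{Z-disconn} and \ref{size-Z}, using nothing beyond the definition of vertex connectivity; essentially no new argument is required. I would begin by recalling that $\kappa(\mathcal{P}(C_n))$ is, by definition, the minimum size of a subset $X\subseteq C_n$ whose removal leaves $\mathcal{P}(\overline{X})$ disconnected. (The alternative ``one vertex'' case from the definition cannot occur here: since $r\geq 2$, the group $C_n$ has non-prime-power order, so $\mathcal{P}(C_n)$ is not complete.) Consequently, \emph{any} single subset $X$ for which $\mathcal{P}(\overline{X})$ is disconnected yields the upper bound $\kappa(\mathcal{P}(C_n))\leq |X|$.

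Next I would apply this observation with $X=Z(r,k)$ for an arbitrary but fixed $k$ satisfying $0\leq k\leq n_r-1$. Proposition \ref{Z-disconn} guarantees that $\mathcal{P}(\overline{Z(r,k)})$ is disconnected, so $Z(r,k)$ is a legitimate vertex cut and therefore $\kappa(\mathcal{P}(C_n))\leq |Z(r,k)|$.

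Finally I would substitute the exact cardinality of $Z(r,k)$ computed in Proposition \ref{size-Z}, namely
$$|Z(r,k)| = \phi(n) + \beta_{0,1,\ldots,r-1}\left[p_r^{n_r -1}\phi(p_1\ldots p_{r-1}) + p_r^k\left[p_1\ldots p_{r-1} - 2\phi(p_1\ldots p_{r-1})\right]\right],$$
and invoke the identity $\beta_{0,1,\ldots,r-1}=p_1^{n_1-1}\ldots p_{r-1}^{n_{r-1}-1}$ recorded at the close of that proof to rewrite the leading coefficient. This reproduces the right-hand side of the claimed inequality verbatim, completing the argument.

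Since both ingredients are already established, there is no genuine obstacle to overcome. The only point worth flagging is that the bound holds for \emph{every} admissible $k$, so in subsequent arguments one is free to minimize the right-hand side over $0\leq k\leq n_r-1$. The sign of the bracket $p_1\ldots p_{r-1}-2\phi(p_1\ldots p_{r-1})$ controls the optimal choice (the largest $k$ when it is negative, the smallest when it is positive, and the term drops out when it vanishes); this is precisely the mechanism that will distinguish the three cases of Theorem \ref{main}.
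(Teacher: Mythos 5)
Your proposal is correct and matches the paper exactly: the corollary is presented there as an immediate consequence of Propositions \ref{Z-disconn} and \ref{size-Z}, which is precisely your argument (cut set $Z(r,k)$ gives the bound, then substitute $|Z(r,k)|$ with $\beta_{0,1,\ldots,r-1}=p_1^{n_1-1}\ldots p_{r-1}^{n_{r-1}-1}$). Your closing remark about minimizing over $k$ according to the sign of $p_1\ldots p_{r-1}-2\phi(p_1\ldots p_{r-1})$ is likewise exactly how the paper proceeds to parts (i)--(iii) of Theorem \ref{main}.
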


\begin{proof}[{\bf Proof of Theorem \ref{main}(ii)}]
If $2\phi(p_1\ldots p_{r-1}) < p_1\ldots p_{r-1}$, then the minimum of $|Z(r,k)|$ occurs when $k=0$. This gives
\begin{equation}\label{bound-2}
\kappa(\mathcal{P}(C_n))\leq \phi(n) + p_1^{n_1 -1}\ldots p_{r-1}^{n_{r-1} -1} \left[p_1\ldots p_{r-1} + \phi(p_1\ldots p_{r-1})(p_r^{n_r -1}-2)\right],
\end{equation}
thus proving Theorem \ref{main}(ii).
\end{proof}

The following example shows that equality may not hold in (\ref{bound-2}).

\begin{example}\label{example}
Consider $n=2310=2\times 3\times 5\times 7\times 11$. The value in the right hand side of (\ref{bound-2}) is $\phi(n) +162$. Now set
$$X=E_n\cup E_{210}\cup E_{330}\cup S_{6}\cup S_{10}\cup S_{15}.$$
Taking $A=E_{30}$ and $B=C_n\setminus(X\cup A)$, it can be seen that $A\cup B$ is a separation of $\mathcal{P}(\overline{X})$ and so $\mathcal{P}(\overline{X})$ is disconnected. Calculating $|X|$, we get
$$|X|= \phi(n)+150 < \phi(n)+162.$$
So equality may not hold in (\ref{bound-2}).
\end{example}

If $2\phi(p_1\ldots p_{r-1}) > p_1\ldots p_{r-1}$, then the minimum of $|Z(r,k)|$ occurs when $k=n_r -1$ and this gives
\begin{equation}\label{bound-1}
\kappa(\mathcal{P}(C_n))\leq \phi(n) + p_1^{n_1 -1}\ldots p_{r-1}^{n_{r-1} -1} p_r^{n_r -1} \left[p_1p_2\ldots p_{r-1} - \phi(p_1p_2\ldots p_{r-1})\right].
\end{equation}
In the next section, we shall show that equality holds in (\ref{bound-1}) which will prove Theorem \ref{main}(i). Observe that the bounds (\ref{bound-2}) and (\ref{bound-1}) coincide if $n_r =1$, or if $r=2$ and $p_1=2$.

\begin{proof}[{\bf Proof of Theorem \ref{main}(iii)}]
Note that $2 \phi(p_1\ldots p_{r-1})=p_1\ldots p_{r-1}$ if and only if $r=2$ and $p_1=2$. So $n=2^{n_1}p_2^{n_2}$ in this case. For $r=2$ and $0\leq k\leq n_2 -1$, we have
$$Z(2,k)=E_{2^{n_1}p_2^{k+1}}\cup E_{2^{n_1}p_2^{k+2}}\cup \ldots \cup E_{2^{n_1}p_2^{n_2 -1}} \cup E_{n}\cup S_{2^{n_1 -1}p_2^{k}}$$
and that $|Z(2,k)|=\phi(n) + 2^{n_1 -1} p_2^{n_2 -1}$ is independent of $k$.
Thus
$$\kappa\left(\mathcal{P}\left(C_{n}\right)\right)\leq |Z(2,k)|=\phi\left(n\right) + 2^{n_1 -1} p_2^{n_2 -1}.$$

Now, let $X$ be a subset of $C_n$ of minimum possible size such that $\mathcal{P}(\overline{X})$ is disconnected. In order to prove Theorem \ref{main}(iii), it is enough to show that $X=Z(2,t)$ for some $0\leq t\leq n_2 -1$.
Write $T= X\setminus (E_1\cup E_n)$. Since $X$ contains $E_1\cup E_n$ and $|X|\leq \phi(n) + 2^{n_1 -1}p_2^{n_2 -1}$, we get
$$|T|\leq 2^{n_1 -1}p_2^{n_2 -1}-1.$$
We claim that the set $E_{2^{n_1 -1}p_2^{n_2}}$ is disjoint from $X$.
Otherwise, $E_{2^{n_1 -1}p_2^{n_2}}\subseteq T$ by Lemma \ref{simple-3} and so $\left\vert E_{2^{n_1 -1}p_2^{n_2}}\right\vert\leq |T|$. On the other hand, using Lemma \ref{simple-1}, we get
$$\left\vert E_{2^{n_1 -1}p_2^{n_2}}\right\vert = \phi(2^{n_1 -1}p_2^{n_2})\geq \phi(2^{n_1})p_2^{n_2 -1}>  2^{n_1 -1}p_2^{n_2 -1}-1\geq |T|,$$
a contradiction.

Let $A\cup B$ be a separation of $\mathcal{P}(\overline {X})$. We may assume that $E_{2^{n_1 -1}p_2^{n_2}}$ is contained in $B$. Then, for $x\in A$, we must have
$$o(x)=2^{n_1}p_2^{j}$$
for some $j$ with $0\leq j\leq n_2 -1$. Let $t\in \{0,1,\ldots, n_2 -1\}$ be the largest integer for which $A$ has an element of order $2^{n_1}p_2^{t}$. Then, using Lemma \ref{simple-3} together with the fact that $A\cup B$ is a separation of $\mathcal{P}(\overline {X})$, the following hold:
\begin{enumerate}
\item[(i)] $E_{2^{n_1}p_2^{t}}\subseteq A$,
\item[(ii)] $E_{2^{n_1}p_2^{t+1}}, E_{2^{n_1}p_2^{t+2}},\ldots, E_{2^{n_1}p_2^{n_2 -1}}$ are contained in $T$,
\item[(iii)] the subgroup $S_{2^{n_1 -1}p_2^{t}}$ is contained in $X$.
\end{enumerate}
Thus $X$ contains $Z(2,t)$. Since $|X|\leq \phi(n) + 2^{n_1 -1}p_2^{n_2 -1}=|Z(2,t)|$, it follows that $X=Z(2,t)$. This completes the proof.
\end{proof}

\section{Proof of Theorem \ref{main}(i) and Corollary \ref{coro}}\label{p1>=r -1}

Assume, throughout this section, that $2 \phi(p_1\ldots p_{r-1})> p_1\ldots p_{r-1}$ (and so $p_1\geq 3$). Let $X$ be a subset of $C_n$ of minimum possible size with the property that $\mathcal{P}(\overline{X})$ is disconnected. By (\ref{bound-1}),
$$|X|\leq \phi(n) + p_1^{n_1 -1}p_2^{n_2 -1} \ldots p_r^{n_r -1} \left[p_1p_2\ldots p_{r-1} - \phi(p_1p_2\ldots p_{r-1})\right].$$
Set $T=X\setminus E_n$. Since $E_n\subseteq X$, we have
\begin{equation}\label{eqn-1}
|T| \leq p_1^{n_1 -1}p_2^{n_2 -1} \ldots p_r^{n_r -1} \left[p_1p_2\ldots p_{r-1} - \phi(p_1p_2\ldots p_{r-1})\right].
\end{equation}

\begin{proposition}\label{disjoint}
Each of the sets $E_{m_i}$, $1\leq i\leq r-1$, is disjoint from $X$.
\end{proposition}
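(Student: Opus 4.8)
The plan is to argue by contradiction and use the dichotomy from Lemma \ref{simple-3} to convert ``disjoint from $X$'' into a pure counting inequality. Since $X$ has minimum size with $\mathcal{P}(\overline{X})$ disconnected, Lemma \ref{simple-3} tells us that for every divisor $d$ of $n$ either $E_d\subseteq X$ or $E_d\cap X=\emptyset$. Applying this with $d=m_i$, it suffices to rule out $E_{m_i}\subseteq X$ for each $i\in\{1,\ldots,r-1\}$; the remaining alternative is precisely the desired conclusion.

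So suppose $E_{m_i}\subseteq X$ for some such $i$. Because $m_i=n/p_i\neq n$, the set $E_{m_i}$ is disjoint from $E_n$, so $E_{m_i}\subseteq X\setminus E_n=T$ and hence $|E_{m_i}|=\phi(m_i)=\phi(n/p_i)\leq |T|$. I would contradict this by bounding $\phi(n/p_i)$ from below and $|T|$ from above and showing the lower bound strictly exceeds the upper bound. For the lower bound I would invoke Lemma \ref{simple-1}, which gives $\phi(n/p_i)\geq \phi(n/p_r^{n_r})\,p_r^{n_r-1}=p_1^{n_1-1}\cdots p_{r-1}^{n_{r-1}-1}p_r^{n_r-1}\,\phi(p_1\cdots p_{r-1})$, while (\ref{eqn-1}) supplies $|T|\leq p_1^{n_1-1}\cdots p_{r-1}^{n_{r-1}-1}p_r^{n_r-1}\bigl[p_1\cdots p_{r-1}-\phi(p_1\cdots p_{r-1})\bigr]$.

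Both bounds carry the common factor $p_1^{n_1-1}\cdots p_{r-1}^{n_{r-1}-1}p_r^{n_r-1}$, so the comparison reduces to weighing $\phi(p_1\cdots p_{r-1})$ against $p_1\cdots p_{r-1}-\phi(p_1\cdots p_{r-1})$. This is exactly where the standing hypothesis of the section enters: $2\phi(p_1\cdots p_{r-1})>p_1\cdots p_{r-1}$ is equivalent to $p_1\cdots p_{r-1}-\phi(p_1\cdots p_{r-1})<\phi(p_1\cdots p_{r-1})$, which makes the lower bound for $\phi(n/p_i)$ strictly larger than the upper bound for $|T|$. Hence $|E_{m_i}|>|T|$, contradicting $|E_{m_i}|\leq |T|$, and we conclude $E_{m_i}\cap X=\emptyset$.

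There is no serious obstacle here: the argument is a single clean comparison once the reduction via Lemma \ref{simple-3} is in place. The only points requiring care are matching the prefactors so that the strict inequality coming from the section hypothesis lines up with the coefficient $p_1\cdots p_{r-1}-\phi(p_1\cdots p_{r-1})$ in (\ref{eqn-1}), and observing that the section hypothesis is used solely to obtain strictness, since Lemma \ref{simple-1} already furnishes the non-strict half of the estimate.
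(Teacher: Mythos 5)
Your proof is correct and follows essentially the same route as the paper's: assume $E_{m_i}\cap X\neq\emptyset$, upgrade to $E_{m_i}\subseteq T$ via Lemma \ref{simple-3}, and play the lower bound on $|E_{m_i}|=\phi(n/p_i)$ from Lemma \ref{simple-1} against the upper bound (\ref{eqn-1}) on $|T|$, with the standing hypothesis $2\phi(p_1\cdots p_{r-1})>p_1\cdots p_{r-1}$ supplying the strict gap. The only cosmetic difference is that the paper invokes the strict form of Lemma \ref{simple-1} (available here since the hypothesis forces $p_1\geq 3$), whereas you correctly observe that the non-strict form already suffices because strictness comes from the section hypothesis.
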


\begin{proof}
Suppose that $E_{m_i}\cap X\neq \emptyset$. Then $E_{m_i}\subseteq X$ by Lemma \ref{simple-3}, in fact, $E_{m_i}\subseteq T$. So $|E_{m_i}|\leq |T|$. Since $1\leq i\leq r-1$,
$|E_{m_i}| = \phi\left(\frac{n}{p_i}\right) > \phi\left(\frac{n}{p_r^{n_r}}\right)p_r^{n_r -1}$ by Lemma \ref{simple-1} and so
\begin{equation}\label{eqn-2}
|E_{m_i}| > p_1^{n_1 -1} p_2^{n_2 -1}\ldots p_r^{n_r -1}\phi(p_1 p_2\ldots p_{r-1}).
\end{equation}
Then the inequalities (\ref{eqn-1}) and (\ref{eqn-2}) together imply
$$|E_{m_i}|-|T| > p_1^{n_1 -1}p_2^{n_2 -1} \ldots p_r^{n_r -1} [2 \phi(p_1p_2\ldots p_{r-1}) - p_1p_2\ldots p_{r-1}].$$
Since $2\phi(p_1p_2\ldots p_{r-1})> p_1p_2\ldots p_{r-1}$, it follows that $|E_{m_i}|>|T|$, a contradiction.
\end{proof}

We shall prove later that the set $E_{m_r}$ is also disjoint from $X$. However, the argument used in the proof of Proposition \ref{disjoint} can not be applied (when $n_r\geq 2$) to prove this statement.

Fix a separation $A\cup B$ of $\mathcal{P}(\overline{X})$. Proposition \ref{disjoint} implies that each $E_{m_i}$, $1\leq i\leq r-1$, is contained either in $A$ or in $B$.

\begin{proposition}\label{E-m-r}
Suppose that $r\geq 3$. If $E_{m_i}\subseteq A$ and $E_{m_j}\subseteq B$ for some $1\leq i\neq j\leq r-1$, then $E_{m_r}$ is disjoint from $X$.
\end{proposition}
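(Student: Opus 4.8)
```latex
The plan is to show that if $E_{m_r}$ met $X$, then $E_{m_r}$ would have to lie entirely in $X$ (by Lemma~\ref{simple-3}), and to derive a contradiction with the minimality of $|X|$ by locating an element of $E_{m_r}$ that can be safely returned to one side of the separation. First I would fix the hypothesis: $E_{m_i}\subseteq A$ and $E_{m_j}\subseteq B$ for some distinct $i,j\in\{1,\ldots,r-1\}$. The key structural observation is that $m_i=n/p_i$ and $m_j=n/p_j$ both have full $p_r$-part $p_r^{n_r}$, while $m_r=n/p_r$ has $p_r$-part $p_r^{n_r-1}$; in particular $o(x)=m_r$ implies $x$ lies in the maximal cyclic subgroup of order $m_r$, and $m_r\mid m_i$ is false but $m_r\mid n$ with the two largest prime-power pieces $p_i$ and $p_j$ still present at full exponent except for the reduced $p_r$-part.

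Assume for contradiction $E_{m_r}\subseteq X$, hence $E_{m_r}\subseteq T$. By minimality of $|X|$, for each $z\in E_{m_r}$ the graph $\mathcal{P}(\overline{X\setminus\{z\}})$ is connected, so $z$ is adjacent to some element of $A$ and some element of $B$. Since all elements of $E_{m_r}$ share the same closed neighborhood (they have equal order), it suffices to understand which vertices of $\overline{X}$ can be adjacent to a fixed $z\in E_{m_r}$: these are exactly the $y\in\overline{X}$ with $o(y)\mid m_r$ or $m_r\mid o(y)$. The second possibility forces $o(y)$ to be a proper multiple of $m_r$ dividing $n$, and since $m_r=n/p_r$ the only such divisor is $n$ itself, giving $o(y)=n$, i.e.\ $y\in E_n\subseteq X$; so no neighbor of $z$ in $\overline{X}$ has order a multiple of $m_r$. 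Hence every neighbor $y$ of $z$ in $\overline{X}$ satisfies $o(y)\mid m_r$, meaning $y\in S_{m_r}$. The central step is then to analyze $S_{m_r}\cap\overline{X}$ and show it must lie entirely on one side of the separation, contradicting that $z$ has neighbors in both $A$ and $B$.

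To carry this out I would use the elements of $E_{m_i}\subseteq A$ and $E_{m_j}\subseteq B$ as anchors. Take $a\in E_{m_i}$ and $b\in E_{m_j}$. Any $y\in S_{m_r}$ has order dividing $m_r=n/p_r$, so in particular $o(y)\mid m_i=n/p_i$ and $o(y)\mid m_j=n/p_j$ would need to be checked; but more usefully, $y\in S_{m_r}\subseteq S_{m_i}$ means $y\in\langle a'\rangle$ for a generator $a'$ of $S_{m_i}$, so $y$ is adjacent to every element of $E_{m_i}$, in particular to $a$. By the same reasoning with $m_j$, if $o(y)\mid m_j$ as well then $y$ is also adjacent to $b$, placing $y$ adjacent to both sides --- impossible unless $y\in X$. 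The point is that $m_r=n/p_r$ divides neither $m_i$ nor $m_j$ in general only through its $p_r$-exponent, but since $m_r$'s prime-power parts for $p_i$ and $p_j$ are full ($p_i^{n_i}$ and $p_j^{n_j}$), we do have $o(y)\mid m_r$ implies $o(y)\mid m_i$ precisely when the $p_i$-part of $o(y)$ is reduced, and similarly for $j$. The main obstacle is precisely this divisibility bookkeeping: I expect one must show that a divisor $d$ of $m_r$ cannot simultaneously fail to divide both $m_i$ and $m_j$, which reduces to the statement that $d$ cannot carry full exponent in both $p_i$ and $p_j$ while also being a proper divisor of $m_r$ --- and if it has full $p_i$-part then $d\mid m_j$, connecting $y$ to $B$, while if it has reduced $p_i$-part then $d\mid m_i$, connecting $y$ to $A$. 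Either way every $y\in S_{m_r}\cap\overline{X}$ is adjacent to $a\in A$ or to $b\in B$ in a way that, combined with $z$'s adjacency to both sides, forces a path from $A$ to $B$ through $z$, contradicting that $A\cup B$ is a separation.

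Thus the heart of the argument is the case analysis on the $p_i$- and $p_j$-exponents of the orders of elements of $S_{m_r}$, showing each such element is forced onto one side $A$ or $B$ through adjacency to the anchor sets $E_{m_i}$ and $E_{m_j}$, so that $z\in E_{m_r}$ cannot have neighbors in both sides without creating an $A$--$B$ edge. This contradiction with the minimality of $|X|$ establishes $E_{m_r}\cap X=\emptyset$. I would remark that the hypothesis $r\geq 3$ is what guarantees the existence of two distinct anchor indices $i\neq j$ in $\{1,\ldots,r-1\}$, and that the split $E_{m_i}\subseteq A$, $E_{m_j}\subseteq B$ is exactly what lets $z$'s required two-sided adjacency be converted into a forbidden $A$--$B$ connection.
```
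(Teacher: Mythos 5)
Your proposal has a genuine gap --- in fact two. The pivotal divisibility claim, that a proper divisor $d$ of $m_r$ cannot carry the full exponent in both $p_i$ and $p_j$ (equivalently, that every such $d$ divides $m_i$ or $m_j$), is false. Take $d=p_i^{n_i}p_j^{n_j}$, or more generally any divisor of $m_r$ whose $p_i$- and $p_j$-exponents equal $n_i$ and $n_j$: since $m_r=n/p_r$ retains the full exponents of all of $p_1,\ldots,p_{r-1}$, such a $d$ divides $m_r$, is a \emph{proper} divisor whenever $r\geq 4$ or $n_r\geq 2$, and divides neither $m_i=n/p_i$ nor $m_j=n/p_j$. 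Elements of these orders are neighbors of $z\in E_{m_r}$ lying in $\overline{X}$ that your anchors $E_{m_i}$ and $E_{m_j}$ do not see, so they are not forced onto either side. (The inclusion $S_{m_r}\subseteq S_{m_i}$ you assert in passing fails for the same reason.) Second, even where your dichotomy does hold, the intended contradiction never materializes: $z$ lies in $X$, so $z$ having neighbors in both $A$ and $B$ produces no edge of $\mathcal{P}(\overline{X})$ between $A$ and $B$; a ``path from $A$ to $B$ through $z$'' contradicts nothing, since a separation only constrains edges among vertices of $\overline{X}$. Indeed, by minimality of $|X|$ \emph{every} vertex of $X$ has neighbors on both sides --- that is exactly the opening move in the proof of Lemma \ref{simple-3} --- and your own case analysis confirms the two-sidedness: orders with reduced $p_i$-exponent land in $A$, those with reduced $p_j$-exponent land in $B$, so $S_{m_r}\cap\overline{X}$ genuinely meets both sides and the plan of pinning it to one side self-destructs. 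Note that in Lemma \ref{simple-3} the contradiction comes from an element of $E_d$ \emph{outside} $X$ sharing a closed neighborhood with one inside; here all of $E_{m_r}$ sits in $X$, so that mechanism is unavailable.

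What is salvageable is your observation that an element whose order is reduced at both $p_i$ and $p_j$ is adjacent to both $E_{m_i}\subseteq A$ and $E_{m_j}\subseteq B$ and hence must lie in $X$: that is precisely the paper's first step, namely $S_{m_{i,j}}\subseteq T$. But the paper finishes by counting, not by adjacency: with $E_{m_r}\subseteq T$ as well (via Lemma \ref{simple-3}) and $S_{m_{i,j}}$ disjoint from $E_{m_r}$, it computes $|S_{m_{i,j}}|+|E_{m_r}|$ separately in the cases $n_r\geq 2$ and $n_r=1$ and shows this exceeds the upper bound (\ref{eqn-1}) on $|T|$, using the standing hypothesis $2\phi(p_1\cdots p_{r-1})>p_1\cdots p_{r-1}$ in force throughout that section. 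Your argument never invokes this hypothesis nor any cardinality estimate, and that is structurally fatal: the proposition is a statement about a \emph{minimum} disconnecting set, and Example \ref{example} shows that in the opposite regime $2\phi(p_1\cdots p_{r-1})<p_1\cdots p_{r-1}$ small disconnecting sets can indeed contain sets of the form $E_{n/p_i}$. Some quantitative use of (\ref{eqn-1}) is therefore indispensable, and it is exactly what your proof is missing.
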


\begin{proof}
Since $A\cup B$ is a separation of $\mathcal{P}(\overline{X})$, the subgroup $S_{m_{i,j}}$ of $C_n$ is contained in $T$. Suppose that $E_{m_r}\cap X\neq \emptyset$. Then $E_{m_r}\subseteq T$ by Lemma \ref{simple-3}. So $|S_{m_{i,j}}| + |E_{m_r}|=|S_{m_{i,j}}\cup E_{m_r}|\leq |T|$. If $n_r\geq 2$, then
\begin{align*}
|S_{m_{i,j}}| + |E_{m_r}| & = \frac{n}{p_i p_j} + p_1^{n_1 -1}\ldots p_{r-1}^{n_{r-1} -1}p_r^{n_r -2}\phi(p_1 p_2\ldots p_r)\\
 & =  p_1^{n_1 -1}\ldots p_{r-1}^{n_{r-1} -1}p_r^{n_r -2}\left(p_r^2\underset{\underset{k\neq i,j}{k=1}}{\overset{r-1}{\prod}} p_k +\phi(p_1\ldots p_{r})\right)\\
 &>  p_1^{n_1 -1}\ldots p_{r-1}^{n_{r-1} -1}p_r^{n_r -2}\left(\underset{k=1}{\overset{r-1}{\prod}} p_k + \phi(p_1\ldots p_{r})\right),
\end{align*}
and so
\begin{align*}
\frac{|S_{m_{i,j}}|+|E_{m_r}|-|T|}{p_1^{n_1 -1}\ldots p_{r-1}^{n_{r-1} -1}p_r^{n_r -2}} & >\underset{k=1}{\overset{r-1}{\prod}} p_k + \phi(p_1\ldots p_{r})-p_r\left[\underset{k=1}{\overset{r-1}{\prod}} p_k - \phi(p_1\ldots p_{r-1})\right]\\
 &= \underset{k=1}{\overset{r-1}{\prod}} p_k +\phi(p_1\ldots p_{r-1})(p_r-1)-p_r\left[\underset{k=1}{\overset{r-1}{\prod}} p_k - \phi(p_1\ldots p_{r-1})\right]\\
 &= \underset{k=1}{\overset{r-1}{\prod}} p_k - \phi(p_1\ldots p_{r-1}) +p_r\left[2\phi(p_1\ldots p_{r-1})- \underset{k=1}{\overset{r-1}{\prod}} p_k\right] > 0.
\end{align*}
If $n_r=1$, then $|S_{m_{i,j}}| + |E_{m_r}|  = \frac{n}{p_i p_j} + p_1^{n_1 -1}\ldots p_{r-1}^{n_{r-1} -1}\phi(p_1 p_2\ldots p_{r-1})$ and so
\begin{align*}
\frac{|S_{m_{i,j}}|+|E_{m_r}|-|T|}{p_1^{n_1 -1}\ldots p_{r-1}^{n_{r-1} -1}} & \geq \underset{\underset{k\neq i,j}{k=1}}{\overset{r}{\prod}} p_k + 2\phi(p_1\ldots p_{r-1})- p_1 p_2\ldots p_{r-1}>0.
\end{align*}
In both cases, it follows that $|S_{m_{i,j}}|+|E_{m_r}|>|T|$, a contradiction.
\end{proof}

\begin{proposition}\label{all}
All the sets $E_{m_i}$, $1\leq i\leq r-1$, are contained either in $A$ or in $B$.
\end{proposition}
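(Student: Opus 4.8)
The plan is to argue by contradiction and reduce the whole statement to a single numerical inequality between products of the shape $\prod p_k-\prod(p_k-1)$. First note that the case $r=2$ is vacuous: there is only the set $E_{m_1}$, which by Proposition~\ref{disjoint} is disjoint from $X$ and hence, by the Remark following Lemma~\ref{simple-3}, lies entirely in $A$ or in $B$. So assume $r\ge 3$ and suppose, for contradiction, that the sets are split, say $E_{m_i}\subseteq A$ and $E_{m_j}\subseteq B$ with $i\ne j$ in $\{1,\dots,r-1\}$. Proposition~\ref{E-m-r} then forces $E_{m_r}$ to be disjoint from $X$, so it too lies wholly in $A$ or in $B$. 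Thus there is a partition $\{1,\dots,r\}=\mathcal A\sqcup\mathcal B$ into nonempty parts with $E_{m_k}\subseteq A$ for $k\in\mathcal A$ and $E_{m_k}\subseteq B$ for $k\in\mathcal B$. Relabelling $A,B$ if needed I may assume $r\in\mathcal A$; the presence of the split means $\mathcal A$ also contains an index $\le r-1$ (so $|\mathcal A|\ge 2$), while $\mathcal B\subseteq\{1,\dots,r-1\}$ is nonempty.

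Next I would pin down what must lie in $T=X\setminus E_n$. For distinct $i,j$ one has $m_i\nmid m_j$, so exactly as in the proof of Proposition~\ref{E-m-r} the only elements adjacent to both $E_{m_i}$ and $E_{m_j}$ are those of $S_{m_{i,j}}\cup E_n$. Since $E_n\subseteq X$ and $A\cup B$ is a separation, this forces $S_{m_{i,j}}\subseteq X$, and being disjoint from $E_n$ it lies in $T$, for every $i\in\mathcal A$, $j\in\mathcal B$. Hence $T\supseteq\bigcup_{i\in\mathcal A,\,j\in\mathcal B}S_{m_{i,j}}$, whose cardinality I would compute by inclusion--exclusion (as announced in the paragraph preceding the Corollary). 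Writing $q_{\mathcal S}=\prod_{k\in\mathcal S}p_k$ and $M=p_1^{n_1-1}\cdots p_r^{n_r-1}$, the computation collapses to the clean product
$$\Big|\bigcup_{i\in\mathcal A,\,j\in\mathcal B}S_{m_{i,j}}\Big|=M\big(q_{\mathcal A}-\phi(q_{\mathcal A})\big)\big(q_{\mathcal B}-\phi(q_{\mathcal B})\big),$$
the crucial point being that $\mathcal A$ and $\mathcal B$ together exhaust all $r$ primes, so the inclusion--exclusion factors into a part for $\mathcal A$ and a part for $\mathcal B$.

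It then suffices to beat the bound (\ref{eqn-1}), i.e.\ to prove $\big(q_{\mathcal A}-\phi(q_{\mathcal A})\big)\big(q_{\mathcal B}-\phi(q_{\mathcal B})\big)>p_1\cdots p_{r-1}-\phi(p_1\cdots p_{r-1})$. Writing $\mathcal A=\mathcal A'\cup\{r\}$ with $\emptyset\ne\mathcal A'\subseteq\{1,\dots,r-1\}$ and $q_{\mathcal A}=p_r\,q_{\mathcal A'}$, a short expansion shows that the difference of the two sides equals $\big(q_{\mathcal A'}-\phi(q_{\mathcal A'})\big)\big[p_r\big(q_{\mathcal B}-\phi(q_{\mathcal B})\big)-q_{\mathcal B}\big]$. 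The first factor is positive since $\mathcal A'\ne\emptyset$, and the bracket is positive because the least prime factor $p$ of $q_{\mathcal B}$ gives $q_{\mathcal B}-\phi(q_{\mathcal B})\ge q_{\mathcal B}/p$, whence $q_{\mathcal B}/\big(q_{\mathcal B}-\phi(q_{\mathcal B})\big)\le p\le p_{r-1}<p_r$. This yields $|T|>p_1^{n_1-1}\cdots p_r^{n_r-1}\big[p_1\cdots p_{r-1}-\phi(p_1\cdots p_{r-1})\big]$, contradicting (\ref{eqn-1}) and completing the proof.

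The main obstacle is the middle step: carrying out the inclusion--exclusion for the union and recognising its factorisation. This is precisely where Proposition~\ref{E-m-r} earns its keep, for without knowing that $E_{m_r}$ sits on one side the index $r$ would remain ``free'' and the union would fail to factor. Everything afterwards is elementary algebra, and, reassuringly, the one partition that would give mere equality rather than strict inequality is the one whose part containing $r$ is the singleton $\{r\}$ — exactly the configuration excluded by the assumption that the sets $E_{m_i}$, $i\le r-1$, are genuinely split.
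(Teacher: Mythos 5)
Your proof is correct, but it takes a genuinely different and more unified route than the paper. The paper splits the argument into two cases according to $a=\left\vert\{i\leq r-1: E_{m_i}\subseteq A\}\right\vert$: for $a=r-2$ it bounds $|T|$ from below by the union of the $r-1$ subgroups $S_{m_{k,i_{a+1}}}$ sharing the single index $i_{a+1}$ on the $B$-side, while for $1\leq a\leq r-3$ (so $b=r-1-a\geq 2$) it uses a hybrid set $R$ consisting of the $b$ subgroups $S_{m_{r,i_{a+j}}}$ together with only the generator sets $E_{m_{i_s,i_{a+t}}}$ (not the full subgroups), and then needs Lemma \ref{simple-1}, Lemma \ref{simple-4}, the factor $b\geq 2$, and the standing hypothesis $2\phi(p_1\cdots p_{r-1})>p_1\cdots p_{r-1}$ to finish. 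You instead take the full union $\bigcup_{i\in\mathcal{A},\,j\in\mathcal{B}}S_{m_{i,j}}$ and compute its cardinality \emph{exactly}; your factorization is correct, as one checks by writing $o(x)=\prod_k p_k^{l_k}$ and noting that $x$ lies in the union iff not all $l_i$ with $i\in\mathcal{A}$ are maximal and not all $l_j$ with $j\in\mathcal{B}$ are maximal, whence the count splits as $\bigl(\prod_{i\in\mathcal{A}}p_i^{n_i}-\prod_{i\in\mathcal{A}}\phi(p_i^{n_i})\bigr)\bigl(\prod_{j\in\mathcal{B}}p_j^{n_j}-\prod_{j\in\mathcal{B}}\phi(p_j^{n_j})\bigr)$ precisely because $\mathcal{A}\sqcup\mathcal{B}=\{1,\ldots,r\}$. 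Your algebraic identity
$$\bigl(q_{\mathcal{A}}-\phi(q_{\mathcal{A}})\bigr)\bigl(q_{\mathcal{B}}-\phi(q_{\mathcal{B}})\bigr)-\bigl(q-\phi(q)\bigr)=\bigl(q_{\mathcal{A}'}-\phi(q_{\mathcal{A}'})\bigr)\bigl[(p_r-1)q_{\mathcal{B}}-p_r\phi(q_{\mathcal{B}})\bigr],$$
with $q=p_1\cdots p_{r-1}$ and $\mathcal{A}=\mathcal{A}'\cup\{r\}$, also checks out by direct expansion, and both factors are strictly positive exactly as you argue ($\mathcal{A}'\neq\emptyset$ for the first; $q_{\mathcal{B}}-\phi(q_{\mathcal{B}})\geq q_{\mathcal{B}}/p$ with $p\leq p_{r-1}<p_r$ for the second). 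What your approach buys: no case analysis on $a$, no appeal to Lemmas \ref{simple-1} and \ref{simple-4}, and the hypothesis $2\phi(p_1\cdots p_{r-1})>p_1\cdots p_{r-1}$ is needed only through the cited Propositions \ref{disjoint} and \ref{E-m-r}, not in the counting itself; moreover your closing observation that equality occurs only for $\mathcal{A}=\{r\}$ pleasingly identifies the extremal configuration $Z(r,n_r-1)$ of Proposition \ref{E-m-r-in-A}. Two small presentational points: the sentence claiming that ``the only elements adjacent to both $E_{m_i}$ and $E_{m_j}$ are those of $S_{m_{i,j}}\cup E_n$'' should be stated in the direction you actually use (every element of $S_{m_{i,j}}\setminus X$ would be adjacent to both sides of the separation, forcing $S_{m_{i,j}}\subseteq T$), and the factorized cardinality of the union deserves the one-line divisor-coordinate justification above rather than being asserted as a collapse of inclusion--exclusion; with those spelled out the proof is complete.
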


\begin{proof}
Clearly, this holds for $r=2$. Assume that $r\geq 3$. Suppose that some of the sets $E_{m_i}$, $1\leq i\leq r-1$, are contained in $A$ and some are in $B$. Then $E_{m_r}$ is disjoint from $X$ by Proposition \ref{E-m-r}. Without loss, we may assume that $E_{m_r}$ is contained in $A$. Let
$$a=\left\vert \left\{E_{m_i}: 1\leq i\leq r-1, E_{m_i}\subseteq A\right\}\right\vert$$
Then $1\leq a\leq r-2$ by our assumption. We shall get a contradiction by showing that $a\notin\{1,2,\ldots, r-2\}$.\\

{\bf Claim-1:} $a\neq r-2$.
Suppose that $a=r-2$. Let $E_{m_{i_1}},E_{m_{i_2}},\ldots,E_{m_{i_a}}$ be the sets contained in $A$ and $E_{m_{i_{a+1}}}$ be contained in $B$, where $\{i_1,\ldots,i_a,i_{a+1}\}=\{1,2,\ldots,r-1\}$. Since $E_{m_r}$ is contained in $A$, the subgroups
$$S_{m_{r,i_{a+1}}}, S_{m_{i_1,i_{a+1}}}, S_{m_{i_2,i_{a+1}}},\ldots , S_{m_{i_a,i_{a+1}}}$$
are contained in $T$. This follows as $A\cup B$ is a separation of $\mathcal{P}(\overline{X})$. Let $Q$ be the union of these $r-1$ subgroups.
Then $|Q|\leq |T|$. We shall get a contradiction by showing that $|Q|>|T|$.

The subscript $i_{a+1}$ is common to all the above $r-1$ subgroups. Applying a similar calculation as in the proof of Proposition \ref{size-Z}, we get
\begin{align*}
\left\vert Q\right\vert &
= p_{i_1}^{n_{i_1}-1}\ldots p_{i_a}^{n_{i_a}-1}p_{i_{a+1}}^{n_{i_{a+1}}-1}p_{r}^{n_{r}-1}\left[p_{i_{1}}\ldots p_{i_{a}}p_r-\phi(p_{i_{1}}\ldots p_{i_{a}}p_r)\right]\\
& =  p_{1}^{n_{1}-1}p_{2}^{n_{2}-1}\ldots p_r^{n_r -1}\left[p_{i_{1}}\ldots p_{i_{a}}p_r-\phi(p_{i_{1}}\ldots p_{i_{a}}p_r)\right].
\end{align*}
Then
\begin{align*}
\frac{|Q|-|T|}{p_{1}^{n_{1}-1}\ldots p_r^{n_r -1}} & \geq p_{i_{1}}\ldots p_{i_{a}}p_r-\phi(p_{i_{1}}\ldots p_{i_{a}}p_r)- p_1 p_2\ldots p_{r-1} + \phi(p_1p_2\ldots p_{r-1})\\
 & = p_{i_{1}}\ldots p_{i_{a}}(p_r - p_{i_{a+1}}) + \phi(p_{i_{1}}\ldots p_{i_{a}})\left[\phi(p_{i_{a+1}})- \phi(p_r)\right]\\
 & = (p_r - p_{i_{a+1}})(p_{i_{1}}\ldots p_{i_{a}} - \phi(p_{i_{1}}\ldots p_{i_{a}})) > 0.
\end{align*}
The last inequality holds as $1\leq i_{a+1}\leq r-1$. So $|Q|>|T|$, a contradiction. This proves Claim-1.\\

{\bf Claim-2:} $a\notin\{1,2,\ldots, r-3\}$.
Suppose that $1\leq a\leq r-3$ (we must have $r\geq 4$ as $a\geq 1$). Set $b=r-1-a$. Then $b\geq 2$. Let $E_{m_{i_1}},\ldots,E_{m_{i_a}}$ be the sets contained in $A$ and $E_{m_{i_{a+1}}},E_{m_{i_{a+2}}},\ldots,E_{m_{i_{a+b}}}$ be contained in $B$, where
$$\{i_1,\ldots,i_a,i_{a+1},\ldots, i_{a+b}\}=\{1,2,\ldots,r-1\}.$$
So the following subgroups
\begin{align*}
& S_{m_{r,i_{a+1}}}, S_{m_{r,i_{a+2}}},\ldots, S_{m_{r,i_{a+b}}}\\
& S_{m_{i_1,i_{a+1}}}, S_{m_{i_1,i_{a+2}}},\ldots, S_{m_{i_1,i_{a+b}}}\\
& \vdots \\
& S_{m_{i_a,i_{a+1}}}, S_{m_{i_a,i_{a+2}}},\ldots, S_{m_{i_a,i_{a+b}}}
\end{align*}
are contained in $T$. Let $1\leq s\leq a$ and $1\leq t\leq b$. Note that $E_{m_{i_s,i_{a+t}}}$ is the set of generators of the subgroup $S_{m_{i_s,i_{a+t}}}$ and so is contained in $T$. Define the set
$$R=\left(\underset{j=1}{\overset{b}{\bigcup}} S_{m_{r,i_{a+j}}}\right) \bigcup \left( \underset{t=1}{\overset{b}{\bigcup}}\left( \underset{s=1}{\overset{a}{\bigcup}} E_{m_{i_s,i_{a+t}}}\right)\right).$$
Since $R$ is contained in $T$, we have $|R|\leq |T|$. We shall get a contradiction by showing that $|R|>|T|$.

The subscript $r$ is common to all the $b$ subgroups contained in $R$. Applying a similar calculation as in the proof of Proposition \ref{size-Z}, we have
\begin{align*}
\left\vert \underset{j=1}{\overset{b}{\bigcup}} S_{m_{r,i_{a+j}}}\right\vert &
= p_{i_1}^{n_{i_1}}\ldots p_{i_a}^{n_{i_a}}p_{i_{a+1}}^{n_{i_{a+1}}-1}\ldots p_{i_{a+b}}^{n_{i_{a+b}}-1}p_r^{n_r -1}\left[p_{i_{a+1}}\ldots p_{i_{a+b}}-\phi(p_{i_{a+1}}\ldots p_{i_{a+b}})\right]\\
& =  p_{i_1}^{n_{i_1}-1}\ldots p_{i_{a+b}}^{n_{i_{a+b}}-1}p_r^{n_r -1}\left[p_{i_1}\ldots p_{i_a}p_{i_{a+1}}\ldots p_{i_{a+b}}- p_{i_1}\ldots p_{i_a}\phi(p_{i_{a+1}}\ldots p_{i_{a+b}})\right]\\
& =  p_{1}^{n_{1}-1}p_{2}^{n_{2}-1}\ldots p_r^{n_r -1}\left[p_{1}p_2\ldots p_{r-1}- p_{i_1}\ldots p_{i_a}\phi(p_{i_{a+1}}\ldots p_{i_{a+b}})\right].
\end{align*}
We next calculate a lower bound for $\left\vert E_{m_{i_s,i_{a+t}}}\right\vert$.
Applying Lemma \ref{simple-1},
\begin{equation}\label{eqn-3}
\phi\left(\frac{p_{i_{a+1}}^{n_{i_{a+1}}}\ldots p_{i_{a+b}}^{n_{i_{a+b}}}p_r^{n_r}}{p_{i_{a+t}}}\right)  > p_{i_{a+1}}^{n_{i_{a+1}-1}}\ldots p_{i_{a+b}}^{n_{i_{a+b}-1}}p_r^{n_r -1}\times \phi\left(p_{i_{a+1}}\ldots p_{i_{a+b}}\right).
\end{equation}
It can easily be seen (irrespective of $n_{i_{s}}=1$ or $n_{i_{s}}\geq 2$) that
\begin{equation}\label{eqn-4}
\phi\left(\frac{p_{i_1}^{n_{i_1}}\ldots p_{i_{a}}^{n_{i_{a}}}}{p_{i_s}}\right)  \geq p_{i_1}^{n_{i_1}-1}\ldots p_{i_{a}}^{n_{i_{a}}-1} \times \frac{\phi\left(p_{i_1}p_{i_2}\ldots p_{i_a}\right)}{p_{i_s}}.
\end{equation}
Using the inequalities (\ref{eqn-3}) and (\ref{eqn-4}), we get
\begin{align*}
\left\vert E_{m_{i_s,i_{a+t}}}\right\vert & = \phi\left(\frac{n}{p_{i_s}p_{i_{a+t}}}\right)\\
 & = \phi\left(\frac{p_{i_1}^{n_{i_1}}\ldots p_{i_{a}}^{n_{i_{a}}}}{p_{i_s}}\right) \phi\left(\frac{p_{i_{a+1}}^{n_{i_{a+1}}}\ldots p_{i_{a+b}}^{n_{i_{a+b}}}p_r^{n_r}}{p_{i_{a+t}}}\right)\\
 & > p_{1}^{n_{1}-1}\ldots p_{r}^{n_{r}-1} \times \frac{\phi\left(p_{i_1}p_{i_2}\ldots p_{i_a}\right)}{p_{i_s}}\times \phi\left(p_{i_{a+1}}\ldots p_{i_{a+b}}\right).
\end{align*}
So
\begin{equation}\label{eqn-5}
\underset{s=1}{\overset{a}{\sum}} \left\vert E_{m_{i_s,i_{a+t}}}\right\vert > p_{1}^{n_{1}-1}\ldots p_{r}^{n_{r}-1} \times \left(\underset{s=1}{\overset{a}{\sum}} \frac{\phi\left(p_{i_1}p_{i_2}\ldots p_{i_a}\right)}{p_{i_s}} \right)\times  \phi\left(p_{i_{a+1}}\ldots p_{i_{a+b}}\right).
\end{equation}
Observe that the right hand side of (\ref{eqn-5}) is independent of $t$. Since $b\geq 2$ and the sets $E_{m_{i_s,i_{a+t}}}$ are pairwise disjoint, we get
\begin{align*}
\left\vert \underset{t=1}{\overset{b}{\bigcup}}\left( \underset{s=1}{\overset{a}{\bigcup}} E_{m_{i_s,i_{a+t}}}\right) \right\vert &=
\underset{t=1}{\overset{b}{\sum}}\left( \underset{s=1}{\overset{a}{\sum}} \left\vert E_{m_{i_s,i_{a+t}}}\right\vert \right)=
b\times \left( \underset{s=1}{\overset{a}{\sum}} \left\vert E_{m_{i_s,i_{a+t}}}\right\vert \right)\\
 & > 2 p_{1}^{n_{1}-1}\ldots p_{r}^{n_{r}-1} \left(\underset{s=1}{\overset{a}{\sum}} \frac{\phi\left(p_{i_1}p_{i_2}\ldots p_{i_a}\right)}{p_{i_s}} \right) \phi\left(p_{i_{a+1}}\ldots p_{i_{a+b}}\right)\\
 & =  p_{1}^{n_{1}-1}\ldots p_{r}^{n_{r}-1} \left(\underset{s=1}{\overset{a}{\sum}} \frac{2\phi\left(p_{i_1}p_{i_2}\ldots p_{i_a}\right)}{p_{i_s}} \right) \phi\left(p_{i_{a+1}}\ldots p_{i_{a+b}}\right)\\
 & > p_{1}^{n_{1}-1}\ldots p_{r}^{n_{r}-1} \left(\underset{s=1}{\overset{a}{\sum}} \frac{p_{i_1}p_{i_2}\ldots p_{i_a}}{p_{i_s}} \right) \phi\left(p_{i_{a+1}}\ldots p_{i_{a+b}}\right).
\end{align*}
The hypothesis that $2\phi(p_1\ldots p_{r-1})>p_1\ldots p_{r-1}$ implies $2\phi(p_{j_1}\ldots p_{j_{l}})>p_{j_1}\ldots p_{j_l}$ for any subset $\{j_1,\ldots, j_l\}$ of $\{1,2,\ldots, r-1\}$. So the last inequality holds in the above. Note that each of the sets $E_{m_{i_s,i_{a+t}}}$ is disjoint from $\underset{j=1}{\overset{b}{\bigcup}} S_{m_{r,i_{a+j}}}$. So
$$\left\vert R\right\vert - \left\vert T\right\vert = \left\vert\underset{j=1}{\overset{b}{\bigcup}} S_{m_{r,i_{a+j}}}\right\vert + \left\vert \underset{t=1}{\overset{b}{\bigcup}}\left( \underset{s=1}{\overset{a}{\bigcup}} E_{m_{i_s,i_{a+t}}}\right) \right\vert - \left\vert T\right\vert.$$
Then
\begin{align*}
\frac{\left\vert R\right\vert - \left\vert T\right\vert}{p_1^{n_1 -1}\ldots p_{r}^{n_{r-1}}}& >  \phi\left( p_{1}p_2\ldots p_{r-1}\right)- p_{i_1}\ldots p_{i_a}\phi(p_{i_{a+1}}\ldots p_{i_{a+b}}) \\
& \;\;\;\;\;\;\;\;\;\;\;\;\;\;\;\;\;\;\; + \phi\left(p_{i_{a+1}}\ldots p_{i_{a+b}}\right) \left(\underset{s=1}{\overset{a}{\sum}} \frac{p_{i_1}p_{i_2}\ldots p_{i_a}}{p_{i_s}} \right)\\
& = \phi\left(p_{i_{a+1}}\ldots p_{i_{a+b}}\right) \left[\phi\left(p_{i_{1}}\ldots p_{i_{a}}\right)- p_{i_1}\ldots p_{i_a}+\left(\underset{s=1}{\overset{a}{\sum}} \frac{p_{i_1}\ldots p_{i_a}}{p_{i_s}} \right)\right]\\
& \geq 0.
\end{align*}
The last inequality holds by Lemma \ref{simple-4}. It follows that $\left\vert R\right\vert > \left\vert T\right\vert$, a contradiction. This proves Claim-2.
\end{proof}

The following proves Theorem \ref{main}(i). Recall the integers $\alpha_{k}$ and $\beta_{k,i_1, i_2,\ldots, i_l}$ defined in Section \ref{upperbounds} for $0\leq k\leq n_r-1$ and subsets $\{i_1,i_2,\ldots, i_l\}$ of $\{1,2,\ldots, r-1\}$.

\begin{proposition}\label{E-m-r-in-A}
The set $E_{m_r}$ is disjoint from $X$. As a consequence, $X=Z(r, n_r-1)$ and
$$\kappa(\mathcal{P}(C_n))=\phi(n) + p_1^{n_1 -1}\ldots p_{r-1}^{n_{r-1} -1} p_r^{n_r -1} \left[p_1p_2\ldots p_{r-1} - \phi(p_1p_2\ldots p_{r-1})\right].$$
\end{proposition}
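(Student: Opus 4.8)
The plan is to determine, for the fixed separation $A\cup B$ in which all $E_{m_i}$ with $1\le i\le r-1$ lie in $A$ (Proposition \ref{all}, after possibly interchanging $A$ and $B$), into which of $A$, $B$ or $X$ the class $E_{m_r}$ falls; throughout I treat each $E_d$ as an indivisible block via Lemma \ref{simple-3}. First I would rule out $E_{m_r}\subseteq A$. If it held, then all of $E_{m_1},\ldots,E_{m_r}$ would lie in $A$; but every proper divisor $d$ of $n$ satisfies $d\mid m_i$ for some $i$ (pick $p_i$ with $v_{p_i}(d)<n_i$), so any block $E_d\subseteq B$ (note $d\neq n$ since $E_n\subseteq X$) would be adjacent to $E_{m_i}\subseteq A$, forcing $B=\emptyset$, a contradiction.

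The real work is to exclude $E_{m_r}\subseteq T$; the direct counting of Proposition \ref{disjoint} is unavailable here when $n_r\ge 2$, as already remarked. Assuming $E_{m_r}\subseteq T$, I would first show that every block $E_d\subseteq B$ has $d$ of full exponent on $p_1,\ldots,p_{r-1}$, since otherwise $d\mid m_i$ for some $i\le r-1$ and $E_d$ would be adjacent to $E_{m_i}\subseteq A$. Hence $d=\alpha_s$ for some $0\le s\le n_r-1$, and because $E_n\subseteq X$ and $E_{m_r}=E_{\alpha_{n_r-1}}\subseteq T$, in fact $0\le s\le n_r-2$. Since $\alpha_0\mid\alpha_1\mid\cdots\mid\alpha_{n_r-1}$, the blocks $E_{\alpha_0},\ldots,E_{\alpha_{n_r-1}}$ are pairwise adjacent, so once one of them lies in $B$ none can lie in $A$; thus each $E_{\alpha_s}$ lies in $B\cup T$.

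Let $s_0$ be the largest index with $E_{\alpha_{s_0}}\subseteq B$ (it exists as $B\neq\emptyset$). By maximality $E_{\alpha_{s_0+1}},\ldots,E_{\alpha_{n_r-1}}\subseteq T$, and since $\gcd(m_i,\alpha_{s_0})=\beta_{s_0,i}$, every element of $S_{\beta_{s_0,i}}$ is adjacent to all of $E_{m_i}\subseteq A$ and all of $E_{\alpha_{s_0}}\subseteq B$, forcing $S_{\beta_{s_0,i}}\subseteq T$ for $1\le i\le r-1$. These sets are disjoint from the $E_{\alpha_s}$ above, so $T\supseteq Z(r,s_0)\setminus E_n$ and, by Proposition \ref{size-Z}, $|T|\ge |Z(r,s_0)|-\phi(n)$. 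Comparing this with the bound (\ref{eqn-1}) on $|T|$ reduces to checking that $\left(p_1\cdots p_{r-1}-2\phi(p_1\cdots p_{r-1})\right)\left(p_r^{s_0}-p_r^{n_r-1}\right)>0$, which holds because both factors are negative (the first by the standing hypothesis $2\phi(p_1\cdots p_{r-1})>p_1\cdots p_{r-1}$, the second since $s_0\le n_r-2$). This contradicts (\ref{eqn-1}). I expect this to be the main obstacle: the point is to realise that one must push the entire truncated set $Z(r,s_0)\setminus E_n$ into $T$, and that the monotonicity of $|Z(r,k)|$ in $k$ reverses sign precisely under the standing hypothesis, so that the nonmaximal cut $Z(r,s_0)$ is strictly larger than the allowed size.

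Having excluded $A$ and $T$, I conclude $E_{m_r}\subseteq B$, which already proves that $E_{m_r}$ is disjoint from $X$. For the consequence, with $E_{m_i}\subseteq A$ and $E_{m_r}\subseteq B$ the subgroups $S_{m_{i,r}}=S_{\beta_{n_r-1,i}}$ are each adjacent to both sides, so $\bigcup_{i=1}^{r-1}S_{m_{i,r}}\subseteq T$. By the computation in Proposition \ref{size-Z} this union already has cardinality equal to the right-hand side of (\ref{eqn-1}); together with $|T|\le$ that same quantity, equality forces $T=\bigcup_{i=1}^{r-1}S_{m_{i,r}}$ and hence $X=E_n\cup T=Z(r,n_r-1)$, yielding the asserted value of $\kappa(\mathcal{P}(C_n))$.
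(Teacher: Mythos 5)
Your proposal is correct and takes essentially the same route as the paper: after Proposition \ref{all}, you force the truncated set $Z(r,s_0)\setminus E_n$ into $T$ (this is exactly the paper's $P\cup Q$ with its $t$ playing the role of your $s_0$) and contradict the bound (\ref{eqn-1}) via the same key inequality $\left[2\phi(p_1\cdots p_{r-1})-p_1\cdots p_{r-1}\right]\left(p_r^{n_r-1}-p_r^{s_0}\right)>0$, then conclude $X=Z(r,n_r-1)$ from $\bigcup_{i=1}^{r-1}S_{m_{i,r}}\subseteq T$ and minimality, just as the paper does. The only differences are organizational: you run a three-way case analysis on the location of the block $E_{m_r}$ (including the easy extra exclusion of $E_{m_r}$ lying on the same side as the other $E_{m_i}$), whereas the paper reaches the identical contradiction by observing directly that every element on the side opposite the $E_{m_i}$ has order $\alpha_j$ and then showing the maximal such $j$ is $n_r-1$.
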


\begin{proof}
By Proposition \ref{all}, we may assume that all the sets $E_{m_i}$, $1\leq i\leq r-1$, are contained in $B$. We show that $E_{m_r}$ is contained in $A$.

Note that the order of an element in $A$ is of the form $\alpha_j=p_1^{n_1}p_2^{n_2} \ldots p_{r-1}^{n_{r-1}}p_r^j$ for some $j$ with $0\leq j\leq n_r -1$. This follows, since the elements of $A$ are not adjacent with the elements of $E_{m_i}$, $1\leq i\leq r-1$, in $B$.
Let $t\in \{0,1,\ldots, n_r -1\}$ be the largest integer for which $A$ has an element of order
$\alpha_t$. Then Lemma \ref{simple-3} implies that $E_{\alpha_t}\subseteq A$. We claim that $t=n_r -1$.

If $n_r =1$, then there is nothing to prove. So consider $n_r \geq 2$. Suppose that $t< n_r -1$. Since $A\cup B$ is a separation of $\mathcal{P}(\overline{X})$, the sets $E_{\alpha_k}$ ($t+1\leq k\leq n_r -1$) and the subgroups $S_{\beta_{t,l}}$ ($1\leq l\leq r-1$) are contained in $T$. Set
$$P=\underset{k=t+1}{\overset{n_r -1} \bigcup} E_{\alpha_k}\mbox{ and } Q=\underset{l=1}{\overset{r -1} \bigcup} S_{\beta_{t,l}}.$$
Then $|P|+|Q|=|P\cup Q|\leq |T|$. We now calculate $|P|$ and $|Q|$. Since the sets $E_{\alpha_k}$ are pairwise disjoint, we have
\begin{align*}
|P| & = \left\vert E_{\alpha_{t+1}}\right\vert+\left\vert E_{\alpha_{t+2}}\right\vert+\ldots + \left\vert E_{\alpha_{n_r -1}}\right\vert\\
& = \phi\left(p_1^{n_1}p_2^{n_2} \ldots p_{r-1}^{n_{r-1}}\right)\underset{k=t+1}{\overset{n_r -1} \sum} \phi\left(p_r^k\right)\\
& = p_1^{n_1 -1}p_2^{n_2 -1} \ldots p_{r-1}^{n_{r-1}-1}\phi(p_1 p_2\ldots p_{r-1})\left(p_r^{n_r -1}-p_r^t\right).
\end{align*}
Applying a similar calculation as in the proof of Proposition \ref{size-Z}, we get
\begin{align*}
\left\vert Q\right\vert & = p_1^{n_1 -1}p_2^{n_2 -1} \ldots p_{r-1}^{n_{r-1}-1}p_r^t\left[p_1p_2\ldots p_{r-1}-\phi(p_1 p_2\ldots p_{r-1})\right].
\end{align*}
Then
\begin{align*}
\frac{|P|+|Q|-|T|}{p_1^{n_1 -1} \ldots p_{r-1}^{n_{r-1}-1}} & \geq \phi(p_1 \ldots p_{r-1})\left(p_r^{n_r -1}-p_r^t\right)\\
 & \;\;\;\;\;\;\;\;\; -\left[p_1\ldots p_{r-1}-\phi(p_1\ldots p_{r-1})\right]\left(p_r^{n_r -1}-p_r^t\right)\\
 & =\left[2\phi(p_1\ldots p_{r-1}) - p_1\ldots p_{r-1}\right] \left(p_r^{n_r -1}-p_r^t\right) > 0.
\end{align*}
The last inequality holds, since $0\leq t< n_r -1$ and $2\phi(p_1\ldots p_{r-1}) > p_1\ldots p_{r-1}$. It follows that $|P|+|Q|>|T|$, a contradiction. Hence $t=n_r -1$ and $E_{m_r}$ is contained in $A$.

We now show that $X=Z(r, n_r-1)$. Since $E_{m_r}$ is contained in $A$ and $E_{m_i}$, $1\leq i\leq r-1$, are contained in $B$, the subgroups
$$S_{m_{1,r}}, S_{m_{2,r}},\ldots, S_{m_{r-1,r}}$$
of $C_n$ must be contained in $X$. Since $E_n\subseteq X$, it follows that $X$ contains $Z(r, n_r-1)$. Then minimality of $|X|$ implies that $X=Z(r, n_r-1)$ and so
$$\kappa(\mathcal{P}(C_n))=|Z(r, n_r-1)|=\phi(n) + p_1^{n_1 -1}\ldots p_{r-1}^{n_{r-1} -1} p_r^{n_r -1} \left[p_1p_2\ldots p_{r-1} - \phi(p_1p_2\ldots p_{r-1})\right].$$
This completes the proof.
\end{proof}

As a consequence of Theorem \ref{main}(i) and (iii), we now prove Corollary \ref{coro}.

\begin{proof}[{\bf Proof of Corollary\ref{coro}}]
Since $p_1\geq r\geq 2$, we get
\begin{equation*}
\begin{aligned}
\frac{\phi(p_1p_2\ldots p_{r-1})}{p_1p_2\ldots p_{r-1}} & = \left(1-\frac{1}{p_1}\right)\left(1-\frac{1}{p_2}\right)\ldots \left(1-\frac{1}{p_{r-1}}\right)\\
& \geq \left(1-\frac{1}{r}\right)\left(1-\frac{1}{r+1}\right)\ldots \left(1-\frac{1}{2(r-1)}\right)= \frac{1}{2},
\end{aligned}
\end{equation*}
where the inequality is strict except when $r=2$ and $p_1=2$. Thus $2 \phi(p_1\ldots p_{r-1}) \geq p_1\ldots p_{r-1}$, with equality if and only if $(r,p_1)= (2,2)$. Then the corollary follows from Theorem \ref{main}(i) and (iii).
\end{proof}

\section{Proof of Theorem \ref{main-1}}\label{last-sec}

Here $r=3$. Since $2\phi(p_1p_2)< p_1p_2$, it follows from the proof Corollary \ref{coro} that $p_1 <3=r$. So $p_1=2$ and $n=2^{n_1}p_2^{n_2}p_3^{n_3}$. Since $2< p_{2}< p_{3}$, we have $p_2\geq 3$ and $p_3\geq 5$.
Let $X$ be a subset of $C_{n}$ of minimum size such that $\mathcal{P}(\overline{X})$ is disconnected. Then, using the bound (\ref{bound-2}), we have
$|X|\leq \phi(n)+2^{n_{1}-1}p_{2}^{n_{2}-1}\left[p_{3}^{n_{3}-1}\phi (p_{2})+2\right].$ Setting $\Gamma = X\setminus E_{n}$, we get
\begin{equation}\label{gamma-size}
|\Gamma|\leq 2^{n_{1}-1}p_{2}^{n_{2}-1}\left[p_{3}^{n_{3}-1}\phi (p_{2})+2\right].
\end{equation}

\begin{proposition}
$E_{m_1}$ is disjoint from $X$.
\end{proposition}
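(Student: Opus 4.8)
The plan is to argue by contradiction, mirroring the proof of Proposition \ref{disjoint} but adding one extra ingredient to close a tight boundary case. Suppose $E_{m_1}\cap X\neq\emptyset$. By Lemma \ref{simple-3} this forces $E_{m_1}\subseteq X$, and since $m_1=n/2\neq n$ the set $E_{m_1}$ lies in $\Gamma=X\setminus E_n$. The crucial additional observation is that a minimum disconnecting set must delete \emph{every} universal vertex: if some $v\in A(C_n)$ survived in $\overline{X}$, then $v$ would be adjacent to all of $\overline{X}\setminus\{v\}$ and $\mathcal{P}(\overline{X})$ would be connected. Hence $A(C_n)=E_1\cup E_n\subseteq X$ by Theorem \ref{cam-1}(2), and in particular $E_1\subseteq\Gamma$. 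Since $E_1$, $E_{m_1}$, $E_n$ consist of elements of the three distinct orders $1$, $n/2$, $n$, the sets $E_1$ and $E_{m_1}$ are disjoint subsets of $\Gamma$, so
$$|\Gamma|\geq 1+|E_{m_1}|.$$

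Next I would compute $|E_{m_1}|=\phi(n/2)$ directly and show it already attains the upper bound $B:=2^{n_1-1}p_2^{n_2-1}\left[p_3^{n_3-1}\phi(p_2)+2\right]$ for $|\Gamma|$ from (\ref{gamma-size}), splitting on the exponent of $2$. For $n_1=1$ one gets
$$|E_{m_1}|-B=p_2^{n_2-1}\left[(p_2-1)(p_3-2)p_3^{n_3-1}-2\right]>0$$
because $p_2\geq 3$ and $p_3\geq 5$; for $n_1\geq 2$, using $\phi(2^{n_1-1})=2^{n_1-2}$, one gets
$$|E_{m_1}|-B=2^{n_1-2}p_2^{n_2-1}\left[(p_2-1)(p_3-3)p_3^{n_3-1}-4\right]\geq 0,$$
with equality exactly when $p_2=3$, $p_3=5$ and $n_3=1$. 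In all cases $|E_{m_1}|\geq B$, so combining with the displayed lower bound gives
$$|\Gamma|\geq 1+|E_{m_1}|\geq 1+B>B\geq|\Gamma|,$$
a contradiction. Therefore $E_{m_1}$ is disjoint from $X$.

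The one delicate point, and the reason the argument cannot merely quote Lemma \ref{simple-1} the way Proposition \ref{disjoint} does, is the boundary family $n=2^{n_1}p_2p_3$ with $p_2=3$, $p_3=5$ and $n_1\geq 2$, where $|E_{m_1}|$ equals $B$ exactly. For these $n$ the totient estimate alone yields only $|E_{m_1}|\geq|\Gamma|$, which is not yet a contradiction; it is precisely the extra element of $E_1\subseteq\Gamma$, namely the identity that any disconnecting set is forced to remove, that upgrades this to the strict inequality $|\Gamma|\geq 1+|E_{m_1}|$ and finishes the case. I would therefore state the inclusion $A(C_n)\subseteq X$ up front, emphasizing that here it is genuinely needed rather than a passing remark.
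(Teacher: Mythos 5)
Your proposal is correct and is essentially the paper's own argument: the paper likewise assumes $E_{m_1}\subseteq\Gamma$ via Lemma \ref{simple-3}, notes that the identity element lies in $\Gamma$ but not in $E_{m_1}$ to get the strict inequality $|E_{m_1}|<|\Gamma|$, and then establishes $|E_{m_1}|-|\Gamma|\geq 0$ by the same case split on $n_1=1$ versus $n_1\geq 2$ with exactly the expressions you derived. Your explicit flagging of the boundary family $p_2=3$, $p_3=5$, $n_3=1$, $n_1\geq 2$ (where the identity-element contribution is genuinely needed) is a faithful elaboration of why the paper invokes the identity element rather than arguing as in Proposition \ref{disjoint}.
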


\begin{proof}
Otherwise, $E_{m_1}\subseteq \Gamma$ by Lemma \ref{simple-3}. Since the identity element of $C_n$ is in $\Gamma$ but not in $E_{m_1}$, we get $|E_{m_1}|< |\Gamma|$. On the other hand, using (\ref{gamma-size}), we have
\begin{equation*}
|E_{m_1}|-|\Gamma|\geq
\begin{cases}
p_{2}^{n_{2}-1}\left[p_{3}^{n_{3}-1}(p_{2}-1)(p_{3}-2)-2\right], &  \text{if $n_{1}=1$}\\
2^{n_{1}-2}p_{2}^{n_{2}-1}\left[p_{3}^{n_{3}-1}(p_{2}-1)(p_{3}-3)-4\right], & \text{if $n_{1}\geq 2$}
\end{cases}.
\end{equation*}
Since $p_{2} \geq 3$ and $p_{3 }\geq 5$, it follows that $|E_{m_1}|-|\Gamma|\geq 0$, a contradiction.
\end{proof}

Fix a separation $A\cup B$ of $\mathcal{P}(\overline{X})$. By the above proposition, $E_{m_1}$ is contained either in $A$ or in $B$. Without loss of generality, we may assume that $E_{m_1}\subseteq A$.

\begin{proposition}\label{at-least}
At least one of $E_{m_2}$ and $E_{m_3}$ is disjoint from $X$.
\end{proposition}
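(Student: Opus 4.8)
The plan is to argue by contradiction through a vertex count, in exactly the spirit of the preceding proposition. Suppose that neither $E_{m_2}$ nor $E_{m_3}$ is disjoint from $X$. Then each of them meets $X$, so Lemma \ref{simple-3} forces $E_{m_2}\subseteq X$ and $E_{m_3}\subseteq X$; since $m_2=n/p_2$ and $m_3=n/p_3$ are proper divisors of $n$, both sets are disjoint from $E_n$ and hence lie inside $\Gamma=X\setminus E_n$. The goal is then to assemble a lower bound for $|\Gamma|$ that contradicts the upper bound (\ref{gamma-size}).

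To build that lower bound I would exploit that $E_1$, $E_{m_2}$, $E_{m_3}$ have pairwise distinct orders ($1$, $n/p_2$, $n/p_3$) and are therefore pairwise disjoint. Since the identity element lies in $\Gamma$ (recall $E_1\cup E_n=A(C_n)\subseteq X$, and $E_1\cap E_n=\emptyset$), this gives $|\Gamma|\ge 1+|E_{m_2}|+|E_{m_3}|$. For the two cardinalities I would use Lemma \ref{simple-1} on $E_{m_2}$, namely $|E_{m_2}|=\phi(n/p_2)\ge \phi(2^{n_1}p_2^{n_2})\,p_3^{n_3-1}=2^{n_1-1}p_2^{n_2-1}(p_2-1)p_3^{n_3-1}$, and a direct factorisation of $\phi(n/p_3)=2^{n_1-1}p_2^{n_2-1}(p_2-1)\phi(p_3^{n_3-1})$ for $E_{m_3}$, which (using $\phi(p_3^{n_3-1})\ge 1$ and $p_2\ge 3$) yields $|E_{m_3}|\ge 2^{n_1-1}p_2^{n_2-1}(p_2-1)\ge 2^{n_1}p_2^{n_2-1}$. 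Feeding these into the displayed inequality produces $|\Gamma|\ge 1+2^{n_1-1}p_2^{n_2-1}\big[(p_2-1)p_3^{n_3-1}+2\big]$, which, since $\phi(p_2)=p_2-1$, strictly exceeds the right-hand side of (\ref{gamma-size}); that is the contradiction.

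The delicate point, and the step I expect to be the real obstacle, is that the crude count $|E_{m_2}|+|E_{m_3}|$ only barely beats the bound. In the boundary case $p_2=3$ with $n_3=1$ one has $|E_{m_3}|=2^{n_1}p_2^{n_2-1}$ exactly and Lemma \ref{simple-1} can also be tight for $E_{m_2}$, so simply writing $|\Gamma|\ge|E_{m_2}|+|E_{m_3}|$ would give only $|\Gamma|\ge(\text{the bound})$ rather than a strict inequality, and no contradiction would follow. Including the extra vertex $1\in\Gamma$—which is forced out of $\overline{X}$ because it is adjacent to every other vertex—is precisely what upgrades the estimate to a strict inequality and closes this equality case. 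I would therefore keep the $+1$ explicit throughout rather than discard it as a lower-order term.
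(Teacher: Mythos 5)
Your proof is correct, and while it shares the paper's overall skeleton --- assume both sets meet $X$, upgrade to $E_{m_2}\cup E_{m_3}\subseteq \Gamma$ via Lemma \ref{simple-3}, and derive a cardinality contradiction with (\ref{gamma-size}) --- the counting itself is done genuinely differently. The paper computes $\theta=|E_{m_2}|+|E_{m_3}|-|\Gamma|$ exactly in four cases according to whether each of $n_2$, $n_3$ equals $1$ or is at least $2$, verifying $\theta>0$ in each case from $p_2\geq 3$, $p_3\geq 5$. You avoid the case split entirely: Lemma \ref{simple-1} (with $i=2$, $r=3$) gives the uniform bound $|E_{m_2}|\geq 2^{n_1-1}p_2^{n_2-1}(p_2-1)p_3^{n_3-1}$; the factorization $\phi(n/p_3)=2^{n_1-1}p_2^{n_2-1}(p_2-1)\phi(p_3^{n_3-1})$ together with $\phi(p_3^{n_3-1})\geq 1$ and $p_2-1\geq 2$ gives $|E_{m_3}|\geq 2^{n_1}p_2^{n_2-1}$; and the identity element --- which does lie in $\Gamma$, exactly as the paper itself exploits in the proof of the preceding proposition about $E_{m_1}$ --- supplies the $+1$ that makes the estimate strictly exceed the right-hand side of (\ref{gamma-size}) even at the boundary case $p_2=3$, $n_3=1$, where your two lower bounds alone would only match it. This buys a shorter, case-free argument at the cost of slightly cruder estimates. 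One small inaccuracy in your commentary (not in the proof): Lemma \ref{simple-1} cannot be tight for $E_{m_2}$ in this setting, since by its statement equality fails except when $r=2$, $p_1=2$, $p_2=3$ and $n_1\geq 2$, whereas here $r=3$; so invoking the strict form of that lemma would be an alternative way to close the boundary case without the $+1$. Since your written argument never relies on tightness or strictness, this does not affect its correctness.
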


\begin{proof}
Otherwise, both $E_{m_2}$ and $E_{m_3}$ are contained in $\Gamma$ by Lemma \ref{simple-3} and so $|E_{m_2}| +|E_{m_3}|\leq |\Gamma|$. Set $\theta=|E_{m_2}| +|E_{m_3}|- |\Gamma|$. Using (\ref{gamma-size}), the following can be verified:
\begin{equation*}
\theta\geq
\begin{cases}
2^{n_{1}-1}\left( p_{3}-3\right), &  \text{if $n_{2}=1=n_{3}$}\\
2^{n_{1}-1}\left(p_{3}^{n_{3}-2}[p_{3}(p_{3}-p_{2})+\phi (p_{2}p_{3})]-2 \right), & \text{if $n_{2}=1$ and $n_{3}\geq 2$}\\
2^{n_{1}-1}p_{2}^{n_{2}-2}\left(\phi (p_{2}p_{3}) -2p_2  \right), & \text{if $n_{2}\geq 2$ and $n_{3} =1$}\\
2^{n_{1}-1}p_{2}^{n_{2}-2}\left( p_{3}^{n_{3}-2}\phi (p_{2})\left(p_{3}^{2}-p_{3}-p_{2}\right)-2p_{2} \right), & \text{if $n_{2}\geq 2$ and $n_{3}\geq 2$}\\
\end{cases}.
\end{equation*}
Since $p_{2} \geq 3$ and $p_{3 }\geq 5$, it follows that $\theta >0$ in all the four cases. This gives $|E_{m_2}| +|E_{m_3}| > |\Gamma|$, a contradiction.
\end{proof}

\begin{proposition}\label{r=3-e-m-2}
$E_{m_2}$ is disjoint from $X$.
\end{proposition}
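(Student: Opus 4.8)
The plan is to argue by contradiction. Suppose $E_{m_2}\cap X\neq\emptyset$, so that $E_{m_2}\subseteq X$ by Lemma \ref{simple-3}, and hence $E_{m_2}\subseteq\Gamma$ since its elements have order $m_2\neq n$. As $E_{m_2}$ is then not disjoint from $X$, Proposition \ref{at-least} forces $E_{m_3}$ to be disjoint from $X$, so $E_{m_3}$ lies entirely in $A$ or entirely in $B$. I would split into these two cases and, in each, exhibit subsets of $\Gamma$ whose total size strictly exceeds the bound \eqref{gamma-size}, the desired contradiction. Throughout I use that for an element $y$, membership in $A$ or $B$ is constrained by which of $E_{m_1},E_{m_2},E_{m_3}$ its order divides or is divided by, via the divisibility criterion for adjacency recalled in Section~2.

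Consider first the case $E_{m_3}\subseteq B$. Since $E_{m_1}\subseteq A$ and $A\cup B$ is a separation, every element of the subgroup $S_{m_{1,3}}$, whose order divides both $m_1$ and $m_3$, is adjacent to both $E_{m_1}$ and $E_{m_3}$, and so cannot lie in $A$ or in $B$; hence $S_{m_{1,3}}\subseteq\Gamma$. Elements of $E_{m_2}$ have $2$-part exactly $2^{n_1}$ while those of $S_{m_{1,3}}$ have $2$-part at most $2^{n_1-1}$, so $E_{m_2}$ and $S_{m_{1,3}}$ are disjoint and $|\Gamma|\geq|E_{m_2}|+|S_{m_{1,3}}|=\phi(m_2)+m_{1,3}$. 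A direct computation, splitting according to $n_2=1$ or $n_2\geq2$, shows this exceeds the right-hand side of \eqref{gamma-size}; the two key inequalities reduce to $p_3^{n_3}>2$ and to $(p_2-1)(p_3-1)+p_2>2p_2$ respectively, both immediate from $p_2\geq3$, $p_3\geq5$.

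Now consider $E_{m_3}\subseteq A$. Then both $E_{m_1}$ and $E_{m_3}$ lie in $A$, so any $y\in B$ is adjacent to neither; comparing orders forces the $2$-part of $o(y)$ to be $2^{n_1}$ and the $p_3$-part to be $p_3^{n_3}$, i.e. $o(y)=2^{n_1}p_2^{\,j}p_3^{n_3}$, and since $E_n,E_{m_2}\subseteq X$ we get $0\leq j\leq n_2-2$ (so this case occurs only when $n_2\geq2$). Let $t$ be the largest index for which $B$ contains an element of order $w:=2^{n_1}p_2^{\,t}p_3^{n_3}$, so $E_w\subseteq B$ by Lemma \ref{simple-3}. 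For each $j$ with $t+1\leq j\leq n_2-1$, any element of order $2^{n_1}p_2^{\,j}p_3^{n_3}$ is adjacent to $E_w$ (as $w$ divides its order), hence cannot lie in $A$, and cannot lie in $B$ by maximality of $t$; thus all these sets lie in $\Gamma$, with union $P$. Moreover the separations $E_{m_1},E_{m_3}\subseteq A$ against $E_w\subseteq B$ force the subgroups $S_{u_1}$ and $S_{u_3}$ into $\Gamma$, where $u_1=2^{n_1-1}p_2^{\,t}p_3^{n_3}$ and $u_3=2^{n_1}p_2^{\,t}p_3^{n_3-1}$. The set $P$ is disjoint from $S_{u_1}\cup S_{u_3}$, being distinguished by $2$-part and $p_3$-part, so $|\Gamma|\geq|P|+|S_{u_1}\cup S_{u_3}|$; evaluating $|S_{u_1}\cup S_{u_3}|=u_1+u_3-\gcd(u_1,u_3)$ together with $|P|$ via a geometric sum and simplifying yields $|\Gamma|\geq 2^{n_1-1}p_3^{n_3-1}\big[(p_3-1)p_2^{n_2-1}+2p_2^{\,t}\big]$.

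The main work, and the step I expect to be the real obstacle, is verifying that this last lower bound beats \eqref{gamma-size} uniformly in $t$. Subtracting, the difference is $2^{n_1-1}$ times $p_2^{n_2-1}p_3^{n_3-1}(p_3-p_2)+2\big(p_2^{\,t}p_3^{n_3-1}-p_2^{n_2-1}\big)$, whose second term can be negative; the argument therefore rests on the primes being distinct and odd, giving $p_3-p_2\geq2$, so that $p_2^{n_2-1}p_3^{n_3-1}(p_3-p_2)\geq 2p_2^{n_2-1}p_3^{n_3-1}\geq 2p_2^{n_2-1}$ dominates the deficit for every admissible $t$, leaving a strictly positive remainder of at least $2p_2^{\,t}p_3^{n_3-1}$. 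With both cases producing $|\Gamma|$ strictly larger than \eqref{gamma-size}, the assumption $E_{m_2}\cap X\neq\emptyset$ is untenable, proving that $E_{m_2}$ is disjoint from $X$.
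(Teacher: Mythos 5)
Your proof is correct and follows essentially the same route as the paper's: the same reduction via Lemma \ref{simple-3} and Proposition \ref{at-least}, the same case split $E_{m_3}\subseteq B$ versus $E_{m_3}\subseteq A$, and in the second case the same forced sets $P=\bigcup_{k=t+1}^{n_2-1}E_{2^{n_1}p_2^{k}p_3^{n_3}}$ and $Q=S_{2^{n_1}p_2^{t}p_3^{n_3-1}}\cup S_{2^{n_1-1}p_2^{t}p_3^{n_3}}$ with identical cardinality computations. Your closing estimate, resting on $p_3-p_2\geq 2$ to absorb the term $-2p_2^{n_2-1}$ and leave a surplus of $2^{n_1}p_2^{t}p_3^{n_3-1}$, is exactly the paper's final inequality.
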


\begin{proof}
Otherwise, $E_{m_2}$ is contained in $\Gamma$ by Lemma \ref{simple-3}. Then, by Proposition \ref{at-least}, $E_{m_3}$ is disjoint from $X$ and so is contained either in $A$ or in $B$.\\

{\bf Case 1:} $E_{m_3}\subseteq B$. Since $E_{m_1}\subseteq A$, the subgroup $S_{m_{1,3}}$ must be contained in $\Gamma$ and so $|E_{m_2}| +|S_{m_{1,3}}| \leq |\Gamma|$. We calculate $|E_{m_2}| +|S_{m_{1,3}}| - |\Gamma|$ using (\ref{gamma-size}). If $n_{2}=1$, then
$$|E_{m_2}| +|S_{m_{1,3}}| - |\Gamma|\geq  2^{n_{1}-1} \left[p_{3}^{n_{3}}- 2\right] > 0.$$
If $n_{2}\geq 2$, then
$$ \frac{|E_{m_2}| +|S_{m_{1,3}}| - |\Gamma|}{2^{n_{1}-1}p_{2}^{n_{2}-2}} \geq  p_{3}^{n_{3}-1}\left[p_{2}^{2} + \phi (p_{2})(p_{3}-p_{2}-1)\right]-2p_{2} > 0.$$
Thus $|E_{m_2}| +|S_{m_{1,3}}| > |\Gamma|$, a contradiction.\\

{\bf Case 2:} $E_{m_3}\subseteq A$. Since $E_{m_2}\subseteq X$ and $A\cup B$ is a separation of $\mathcal{P}(\overline{X})$, the order of any element of $B$ is of the form $2^{n_{1}}p_{2}^{k}p_{3}^{n_{3}},$
where $0\leq k\leq n_{2}-2$ (this is possible only when $n_2\geq 2$). Let $t\in\{0,1,\ldots, n_2-2\}$ be the largest integer for which $B$ has an element of order $2^{n_{1}}p_{2}^{t}p_{3}^{n_{3}}$. Then the sets  $$E_{2^{n_{1}}p_{2}^{t+1}p_{3}^{n_{3}}},E_{2^{n_{1}}p_{2}^{t+2}p_{3}^{n_{3}}},\ldots, E_{2^{n_{1}}p_{2}^{n_{2}-2}p_{3}^{n _{3}}},E_{2^{n_{1}}p_{2}^{n_{2}-1}p_{3}^{n _{3}}}=E_{m_2}$$
and the two subgroups
$$S_{2^{n_{1}}p_{2}^{t}p_{3}^{n_{3}-1}},\; S_{2^{n_{1}-1}p_{2}^{t}p_{3}^{n_{3}}}$$
are contained in $\Gamma$. Set $P=\underset{k=t+1}{\overset{n_{2}-1}{\bigcup}} E_{2^{n_{1}}p_{2}^{k}p_{3}^{n_{3}}}$ and $Q=S_{2^{n_{1}}p_{2}^{t}p_{3}^{n_{3}-1}}\cup S_{2^{n_{1}-1}p_{2}^{t}p_{3}^{n_{3}}}$ . Then $|P|+|Q|=|P\cup Q|\leq |\Gamma|$. We have $|Q|=2^{n_{1}-1}p_{2}^{t}p_{3}^{n_{3}-1}(p_{3}+1)$ and
$$|P| = \sum _{k=t+1} ^{n_{2}-1} \phi (2^{n_{1}}p_{2}^{k}p_{3}^{n_{3}})= 2^{n_{1}-1}p_{3}^{n_{3}-1}(p_{3}-1)\left(p_{2}^{n_{2}-1}-p_{2}^{t}\right).$$
So
\begin{eqnarray*}
 |P|+|Q|& = & 2^{n_{1}-1}p_{3}^{n_{3}-1}\left[(p_{3}-1)(p_{2}^{n_{2}-1}-p_{2}^{t})+p_{2}^{t}(p_{3}+1)\right]\\
& = & 2^{n_{1}-1}p_{2}^{n_{2}-1}p_{3}^{n_{3}-1}(p_{3}-1)+ 2^{n_{1}}p_{2}^{t}p_{3}^{n_{3}-1}.
\end{eqnarray*}
Then $|P|+|Q|- |\Gamma|\geq 2^{n_{1}-1} p_{2}^{n_{2}-1}[p_{3}^{n_{3}-1}(p_{3}-p_{2})-2]+2^{n_{1}}p_{2}^{t}p_{3}^{n_{3}-1} >  0$ for any $t$. This gives $|P|+|Q|> |\Gamma|$, a contradiction.
\end{proof}

\begin{proposition}\label{r=3-e-m-2-1}
$E_{m_2}$ is contained in $A$.
\end{proposition}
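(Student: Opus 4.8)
The plan is to argue by contradiction, exploiting the fact that $E_{m_1}$ and $E_{m_2}$ would then lie on opposite sides of the separation. We have already fixed $E_{m_1}\subseteq A$, and by Proposition \ref{r=3-e-m-2} the set $E_{m_2}$ is disjoint from $X$, so $E_{m_2}\subseteq A$ or $E_{m_2}\subseteq B$. Suppose, towards a contradiction, that $E_{m_2}\subseteq B$. The strategy is to show that this configuration forces the entire subgroup $S_{m_{1,2}}$ into $X$, where $m_{1,2}=n/(p_1p_2)=2^{n_1-1}p_2^{n_2-1}p_3^{n_3}$, and then to contradict the size bound (\ref{gamma-size}) by a single cardinality comparison.

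For the forcing step, take any $z\in S_{m_{1,2}}$. Its order divides $m_{1,2}$, hence divides both $m_1$ and $m_2$, and since $o(z)$ is a proper divisor of $n$ we have $z\notin E_{m_1}\cup E_{m_2}$. Because $C_n$ has a unique subgroup of each order, $z$ is adjacent to every element of $E_{m_1}$ and to every element of $E_{m_2}$, and these are genuine edges between distinct vertices. As $E_{m_1}\subseteq A$ and $E_{m_2}\subseteq B$ are both nonempty, an element $z\in\overline{X}$ would be adjacent to a vertex of $A$ and to a vertex of $B$, contradicting that $A\cup B$ is a separation of $\mathcal{P}(\overline{X})$. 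Hence $z\in X$, and since $o(z)$ properly divides $n$ it lies in $\Gamma=X\setminus E_n$. Thus $S_{m_{1,2}}\subseteq\Gamma$, so $|S_{m_{1,2}}|\leq|\Gamma|$.

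For the counting step, $|S_{m_{1,2}}|=m_{1,2}=2^{n_1-1}p_2^{n_2-1}p_3^{n_3}$, and using (\ref{gamma-size}) together with $\phi(p_2)=p_2-1$ we obtain
$$|S_{m_{1,2}}|-|\Gamma|\geq 2^{n_1-1}p_2^{n_2-1}\left[p_3^{n_3-1}(p_3-p_2+1)-2\right].$$
Since $p_2\geq 3$ and $p_3\geq 5$ are distinct odd primes, $p_3-p_2\geq 2$, so the bracketed quantity is at least $3p_3^{n_3-1}-2>0$, the tightest instance being $n_3=1$ where it equals $p_3-p_2-1\geq 1$. This gives $|S_{m_{1,2}}|>|\Gamma|$, contradicting $S_{m_{1,2}}\subseteq\Gamma$. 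Therefore $E_{m_2}\subseteq A$.

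I do not expect a serious obstacle here. Unlike the earlier propositions, no auxiliary union of several subgroups needs to be estimated by inclusion–exclusion: the single subgroup $S_{m_{1,2}}$ already overshoots the bound on $|\Gamma|$. The only points requiring care are keeping the forcing argument airtight — in particular recording that $z\notin E_{m_1}\cup E_{m_2}$ so that the adjacencies produced are true edges — and verifying the numerical inequality at the boundary value $n_3=1$, both of which are routine consequences of $p_2\geq 3$ and $p_3\geq 5$.
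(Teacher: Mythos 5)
Your proof is correct, and it takes a genuinely shorter route than the paper's. Both arguments start identically: supposing $E_{m_2}\subseteq B$, this together with $E_{m_1}\subseteq A$ forces $S_{m_{1,2}}\subseteq\Gamma$ (your careful justification --- every $z\in S_{m_{1,2}}$ is a proper power-graph neighbour of all of $E_{m_1}$ and all of $E_{m_2}$, since $o(z)\leq m_{1,2}<m_2<m_1$, so $z$ cannot survive in either side of the separation --- is exactly what the paper asserts in one line). The two proofs diverge at the counting step. The paper never compares $|S_{m_{1,2}}|$ with $|\Gamma|$ directly; instead it runs a trichotomy on where $E_{m_3}$ sits ($\Gamma$, $A$, or $B$), augmenting $S_{m_{1,2}}$ in each branch by an extra set ($E_{m_3}$, $S_{m_{2,3}}$, or $S_{m_{1,3}}$ respectively, the latter two themselves forced into $\Gamma$ by the separation) before exceeding the bound (\ref{gamma-size}). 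You observe that the lone subgroup already suffices: $|S_{m_{1,2}}|-|\Gamma|\geq 2^{n_1-1}p_2^{n_2-1}\left[p_3^{n_3-1}(p_3-p_2+1)-2\right]$, and since $p_2<p_3$ are both odd one has $p_3-p_2\geq 2$, whence the bracket is at least $3p_3^{n_3-1}-2\geq 1$. This eliminates the case analysis and any mention of $E_{m_3}$ altogether. As to what each approach buys: your inequality is tight --- at $n_3=1$, $(p_2,p_3)=(3,5)$ the margin is exactly $2^{n_1-1}p_2^{n_2-1}$ --- and it hinges on the parity fact $p_3-p_2\geq 2$, which you correctly flag (with only $p_3>p_2$ the bracket could vanish when $n_3=1$, so this is the one point where the argument could not be made sloppier); the paper's branches produce much larger slack (for instance $p_3^{n_3}-2$ in the case $E_{m_3}\subseteq B$) and follow the same template as Propositions \ref{r=3-e-m-2} and \ref{r=3-order}, but for this particular statement that extra machinery is unnecessary.
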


\begin{proof}
Proposition \ref{r=3-e-m-2} implies that $E_{m_2}$ is contained either in $A$ or in $B$. Suppose that $E_{m_2}\subseteq B$. Since $E_{m_1}\subseteq A$, the subgroup $S_{m_{1,2}}$ is contained in $\Gamma$. Consider the set $E_{m_3}$, which would be contained either in $A, B$ or $\Gamma$. We show that none of these possibilities occurs. If $E_{m_3}\subseteq \Gamma$, then
\begin{equation*}
\frac{|E_{m_3}|+|S_{m_{1,2}}| - |\Gamma|}{2^{n_{1}-1}p_{2}^{n_{2}-1}}\geq
\begin{cases}
p_{3}-2, & \text{if  $n_3=1$}\\
p_{3}^{n_{3}-2}\left(p_{3}^2-p_{2}+1\right)-2, & \text{if  $n_3\geq 2$}\\
\end{cases}.
\end{equation*}
This gives $|E_{m_3}|+|S_{m_{1,2}}| > |\Gamma|$, a contradiction. If $E_{m_3}\subseteq A$, then the subgroup $S_{m_{2,3}}$ is contained in $\Gamma$. Since
$$|S_{m_{1,2}}\cup S_{m_{2,3}}|- |\Gamma|\geq 2^{n_{1}-1}p_{2}^{n_{2}-1}\left[ p_{3}^{n_{3}-1}(p_{3}-p_{2}+2)-2\right]>0,$$
we get $|S_{m_{1,2}}\cup S_{m_{2,3}}| > |\Gamma|$, a contradiction. Finally, assume that $E_{m_3}\subseteq B$. Then the subgroup $S_{m_{1,3}}$ is contained in $\Gamma$.
In this case, we get
$$|S_{m_{1,2}}\cup S_{m_{1,3}}|- |\Gamma|\geq 2^{n_{1}-1}p_{2}^{n_{2}-1}\left[ p_{3}^{n_{3}}-2\right]>0,$$
giving $|S_{m_{1,2}}\cup S_{m_{1,3}}| > |\Gamma|$, a contradiction.
\end{proof}

\begin{proposition}\label{r=3-order}
The order of any element of $B$ is $2^{n_{1}}p_{2}^{n_{2}}$.
\end{proposition}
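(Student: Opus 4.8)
The plan is to pick an arbitrary $x\in B$, first pin down the possible values of $o(x)$ using that $x$ is non-adjacent to everything in $A$, and then run a counting argument against the bound (\ref{gamma-size}). Recall we have arranged $E_{m_1}\subseteq A$, and Propositions \ref{r=3-e-m-2} and \ref{r=3-e-m-2-1} give $E_{m_2}\subseteq A$. Since $A\cup B$ is a separation, every $x\in B$ is non-adjacent to all of $E_{m_1}$ and $E_{m_2}$. Writing $o(x)=2^{a}p_2^{b}p_3^{c}$ and using that adjacency in $\mathcal{P}(C_n)$ is governed by divisibility of orders, non-adjacency to $E_{m_1}$ (whose elements have order $m_1=2^{n_1-1}p_2^{n_2}p_3^{n_3}$) forces $a=n_1$ and $(b,c)\neq(n_2,n_3)$, while non-adjacency to $E_{m_2}$ (order $m_2=2^{n_1}p_2^{n_2-1}p_3^{n_3}$) forces $b=n_2$ and $(a,c)\neq(n_1,n_3)$. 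Combining these gives $o(x)=\alpha_c=2^{n_1}p_2^{n_2}p_3^{c}$ for some $0\le c\le n_3-1$. I would then let $t$ be the largest index with $E_{\alpha_t}\cap B\neq\emptyset$; by Lemma \ref{simple-3}, $E_{\alpha_t}\subseteq B$, and it suffices to prove $t=0$.

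Assume for contradiction that $t\ge 1$. First I would show that for each $c$ with $t<c\le n_3-1$ the set $E_{\alpha_c}$ lies in $X$: by maximality of $t$ it cannot meet $B$, and it cannot lie in $A$ either, since $\alpha_t\mid\alpha_c$ would put every element of $E_{\alpha_t}\subseteq B$ into the cyclic subgroup generated by an element of $E_{\alpha_c}$, creating an $A$--$B$ edge. Hence $P:=\bigcup_{c=t+1}^{n_3-1}E_{\alpha_c}\subseteq\Gamma$. Second, I would invoke the standard fact used throughout Section \ref{p1>=r -1}, namely that whenever $E_d\subseteq A$ and $E_{d'}\subseteq B$ the subgroup $S_{\gcd(d,d')}$ lies in $X$. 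Applying this to the pairs $(E_{\alpha_t},E_{m_1})$ and $(E_{\alpha_t},E_{m_2})$ forces the two subgroups $S_{2^{n_1-1}p_2^{n_2}p_3^{t}}$ and $S_{2^{n_1}p_2^{n_2-1}p_3^{t}}$ into $X$; write $Q$ for their union, so $Q\subseteq\Gamma$.

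The elements of $Q$ have $p_3$-exponent at most $t$, whereas those of $P$ have $p_3$-exponent at least $t+1$, so $P$ and $Q$ are disjoint subsets of $\Gamma$. A telescoping evaluation of $|P|$ together with inclusion--exclusion for $|Q|$ yields $|P|+|Q|=2^{n_1-1}p_2^{n_2-1}\left[(p_2-1)p_3^{n_3-1}+2p_3^{t}\right]$. Since $p_3\ge 5$ and $t\ge 1$ we have $2p_3^{t}>2$, so this strictly exceeds the upper bound $2^{n_1-1}p_2^{n_2-1}\left[(p_2-1)p_3^{n_3-1}+2\right]$ for $|\Gamma|$ from (\ref{gamma-size}), contradicting $P\cup Q\subseteq\Gamma$. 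Hence $t=0$, so every element of $B$ has order $\alpha_0=2^{n_1}p_2^{n_2}$.

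The delicate points are the order bookkeeping in the first step and, above all, the correct identification of the two forced subgroups as the $\gcd$'s of $\alpha_t$ with $m_1$ and $m_2$; once that is done, the disjointness of $P$ and $Q$ makes the count additive and the comparison $2p_3^{t}>2$ is decisive. It is worth noting that the quantity $|P|+|Q|$ meets the bound (\ref{gamma-size}) with \emph{equality} at $t=0$, which is precisely the fingerprint that the extremal set will turn out to be $Z(3,0)$.
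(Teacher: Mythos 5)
Your proof is correct and follows essentially the same route as the paper's: the same reduction of the orders of elements of $B$ to the form $2^{n_1}p_2^{n_2}p_3^{c}$ with $0\le c\le n_3-1$, the same choice of the maximal exponent $t$, the same two forced subgroups $S_{2^{n_1-1}p_2^{n_2}p_3^{t}}$ and $S_{2^{n_1}p_2^{n_2-1}p_3^{t}}$ (your $Q$ is the paper's $Q_1$, your $P$ its $P_1$), and the same count $|P|+|Q|=2^{n_1-1}p_2^{n_2-1}\left[(p_2-1)p_3^{n_3-1}+2p_3^{t}\right]$ played off against (\ref{gamma-size}). Your explicit justifications of the divisibility bookkeeping and of the $\gcd$ fact merely spell out steps the paper leaves implicit, so nothing further is needed.
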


\begin{proof}
We have $E_{m_1}\subseteq A$ by our assumption and $E_{m_2}\subseteq A$ by Proposition \ref{r=3-e-m-2-1}. Since $A\cup B$ is a separation of $\mathcal{P}(\overline{X})$, the order of any element of $B$ is of the form $2^{n_{1}}p_{2}^{n_{2}}p_{3}^{k},$
where $0\leq k\leq n_{3}-1$. Let $t\in\{0,1,\ldots,n_3 -1\}$ be the largest integer for which $B$ has an element of order $2^{n_{1}}p_{2}^{n_{2}}p_{3}^{t}$ (and so $E_{2^{n_{1}}p_{2}^{n_{2}}p_{3}^{t}}\subseteq B$). Then the sets
$$E_{2^{n_{1}}p_{2}^{n_{2}}p_{3}^{t+1}},E_{2^{n_{1}}p_{2}^{n_{2}}p_{3}^{t+2}},\ldots ,E_{2^{n_{1}}p_{2}^{n_{2}}p_{3}^{n _{3}-1}}$$
and the two subgroups
$$S_{2^{n_{1}}p_{2}^{n_{2}-1}p_{3}^{t}},\; S_{2^{n_{1}-1}p_{2}^{n_{2}}p_{3}^{t}}$$
are contained in $\Gamma$. Set $P_1=\underset{k=t+1}{\overset{n_{3}-1}{\bigcup}} E_{2^{n_{1}}p_{2}^{n_{2}}p_{3}^{k}}$ and $Q_1=S_{2^{n_{1}}p_{2}^{n_{2}-1}p_{3}^{t}}\bigcup S_{2^{n_{1}-1}p_{2}^{n_{2}}p_{3}^{t}}$. Then $|P_1|+|Q_1|=|P_1\cup Q_1|\leq |\Gamma|$. On the other hand, we have
$$|Q_1|=2^{n_{1}-1}p_{2}^{n_2 -1}p_{3}^{t}(p_{2}+1)\mbox{ and } |P_1| = 2^{n_{1}-1}p_{2}^{n_{2}-1}(p_{2}-1)\left(p_{3}^{n_{3}-1}-p_{3}^{t}\right). $$
So $|P_1|+|Q_1|  =  2^{n_{1}-1}p_{2}^{n_{2}-1}p_{3}^{n_{3}-1}(p_{2}-1)+ 2^{n_{1}}p_{2}^{n_2 -1}p_{3}^{t}.$
Then, using (\ref{gamma-size}), we get
$$|P_1|+|Q_1|- |\Gamma|\geq 2^{n_{1}}p_{2}^{n_{2}-1}(p_{3}^{t}-1).$$
If $t\geq 1$, then it would follow that $|P_1|+|Q_1|> |\Gamma|$ which is not possible. So $t=0$ and every element in $B$ is of order $2^{n_{1}}p_{2}^{n_{2}}$.
\end{proof}

Now, Proposition \ref{r=3-order} together with the facts that $E_{m_1}$, $E_{m_2}$ are contained in $A$ imply the sets $E_{2^{n_{1}}p_{2}^{n_{2}}p_{3}^{k}}$ with $k\in\{1,2,\ldots, n_3 -1\}$ and the two subgroups $S_{2^{n_{1}}p_{2}^{n_{2}-1}}, S_{2^{n_{1}-1}p_{2}^{n_{2}}}$ are contained in $\Gamma$. Also $E_n\subseteq X$. Since $Z(3,0)$ is precisely the union of these sets, it follows that $X$ contains $Z(3,0)$. Then, by the minimality of $|X|$, we get $X=Z(3,0)$ and hence
$$\kappa(\mathcal{P}(C_n))=|X|=|Z(3,0)|=\phi(n)+2^{n_{1}-1}p_{2}^{n_{2}-1}\left( p_3^{n_3 -1}\phi(p_2) +2 \right).$$
This completes the proof of Theorem \ref{main-1}.

\vskip .5cm

\noindent{\bf Address}:\\
School of Mathematical Sciences\\
National Institute of Science Education and Research, Bhubaneswar (HBNI)\\
At/Po - Jatni, District - Khurda, Odisha - 752050, India.\\
E-mails: sriparna@niser.ac.in, klpatra@niser.ac.in, bksahoo@niser.ac.in
\end{document}